\newtheoremstyle{custom}
{6pt}
{3pt}
{\sl}
{}
{\bf}
{.}
{.5em}
{}
\theoremstyle{custom}
\newtheorem{theorem}{Theorem}[section]
\newtheorem{lemma}[theorem]{Lemma}
\newtheorem{corollary}[theorem]{Corollary}
\newtheorem{conjecture}[theorem]{Conjecture}
\newtheorem{statement}[theorem]{Statement}
\title{Further techniques on a polynomial positivity question of Collins, Dykema, and Torres-Ayala}
\author{Nathaniel K.~Green\\Edward D.~Kim\\\sl{University of Wisconsin-La Crosse}}
\begin{document}

\maketitle

\begin{abstract}
We prove that the coefficient of $t^2$ in $\mathsf{trace}((A+tB)^6)$ is a sum of squares in the entries of the symmetric matrices $A$ and $B$.
\end{abstract}

\noindent{\bf MSC:} 11C99, 15A42, 15A10, 26C10, 90C22 \\
\noindent{\bf Keywords:} semidefinite programming, sum of squares, positivity of multivariate polynomials, roots of polynomials

\section{Introduction}

In~\cite{Collins}, Collins, Dykema, and Torres-Ayala asked the following question:
\begin{conjecture}\label{conjecture:CDTA}
If $A$ and $B$ are symmetric matrices of the same size, then for all $m \geq r$ where $m$ and $r$ are even, the coefficient of $t^r$ in $\mathsf{trace}((A+tB)^m)$ is non-negative.
\end{conjecture}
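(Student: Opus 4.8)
The plan is to recast the coefficient so that a manifestly non-negative piece is isolated, to settle the base case $r=2$ completely, and then to attack the remaining ``cross terms,'' which I expect to be the crux. Throughout I assume $A,B$ symmetric and $m,r$ even.

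First I would record a Frobenius reformulation. Since $A+tB$ is symmetric, $(A+tB)^{m/2}$ is symmetric, and products of symmetric matrices transpose by reversal, so
\[
\mathsf{trace}\big((A+tB)^m\big)=\mathsf{trace}\big(((A+tB)^{m/2})^{\!\top}(A+tB)^{m/2}\big)=\big\|(A+tB)^{m/2}\big\|_F^2 .
\]
Writing $(A+tB)^{m/2}=\sum_{k=0}^{m/2}t^k C_k$, where $C_k$ is the sum of all length-$(m/2)$ words in $A,B$ with exactly $k$ copies of $B$, each $C_k$ is symmetric (reversal is a bijection on such words, so $C_k^{\!\top}=C_k$). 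Expanding the norm gives $\mathsf{trace}((A+tB)^m)=\sum_{k,k'}t^{k+k'}\langle C_k,C_{k'}\rangle_F$ with $\langle X,Y\rangle_F:=\mathsf{trace}(XY)$, so the coefficient of $t^r$ is the anti-diagonal sum
\[
c_r=\sum_{k+k'=r}\langle C_k,C_{k'}\rangle_F=\big\|C_{r/2}\big\|_F^2+2\sum_{0\le k<r/2}\langle C_k,C_{r-k}\rangle_F .
\]
The diagonal term $\|C_{r/2}\|_F^2\ge 0$ is free; the task is to control the cross terms $\langle C_k,C_{r-k}\rangle_F$ for $k<r/2$.

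As a base case I would settle $r=2$ by a sharper reduction: diagonalize $A=\mathrm{diag}(\lambda_1,\dots,\lambda_n)$ orthogonally (preserving the trace and the symmetry of $B$, now with entries $b_{pq}=b_{qp}$). Cyclicity gives $c_2=\sum_{d=1}^{m-1}(m-d)\,\mathsf{trace}(BA^{d-1}BA^{m-d-1})$, and with $A$ diagonal this becomes $c_2=\sum_{p,q}b_{pq}^2\,S(\lambda_p,\lambda_q)$, where
\[
S(x,y)=\sum_{d=1}^{m-1}(m-d)\,x^{m-d-1}y^{d-1}=\frac{N(x,y)}{(x-y)^2},\qquad N(x,y)=(m-1)x^m-m\,x^{m-1}y+y^m .
\]
Here $\partial_x N=m(m-1)\,x^{m-2}(x-y)$, which for even $m$ (so $x^{m-2}\ge 0$) is negative for $x<y$ and positive for $x>y$; thus $x=y$ is the global minimum of $x\mapsto N(x,y)$, with value $N(y,y)=0$, so $N\ge 0$ and hence $S\ge 0$. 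Therefore $c_2=\sum_{p,q}b_{pq}^2\,S(\lambda_p,\lambda_q)\ge 0$. This recovers the $(m,r)=(6,2)$ case and exhibits the mechanism: for $r=2$ the word-monomials collapse to squares $b_{pq}^2$ and the eigenvalue weight $S$ is globally non-negative.

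For general even $r$ the plan is to prove non-negativity of the anti-diagonal sum above by exhibiting the cross terms as dominated by the squares, i.e.\ by producing a sum-of-squares (trace-of-square) certificate for $c_r$ in the entries of $A,B$. I would search for one by symmetrizing words under the cyclic-plus-reversal action, pairing each word with its reverse (equal trace, since $A,B$ are symmetric) and completing squares, reducing feasibility to a finite semidefinite program for each $(m,r)$. I expect this step to be the decisive obstacle. The Gram matrix $[\langle C_k,C_{k'}\rangle_F]$ is positive semidefinite, but an anti-diagonal sum of a PSD matrix need not be non-negative (for instance $vv^{\!\top}$ with $v=(1,0,-1)$ has anti-diagonal sum $-2$), so the argument must exploit the special structure of the $C_k$ arising from $(A+tB)^{m/2}$ rather than PSD-ness alone. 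Moreover, by Hilbert-type obstructions one cannot expect a uniform SOS certificate to persist for all $m$, so for large $r$ the essential difficulty is a genuinely structural or inductive control of the cross terms $\langle C_k,C_{r-k}\rangle_F$ — precisely the point at which the conjecture remains open.
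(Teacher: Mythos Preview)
The statement you are attempting is Conjecture~1.1, and the paper does \emph{not} prove it in general: the paper's contribution is only the single case $(m,r)=(6,2)$ (Theorem~1.4), established by exhibiting an explicit sum-of-squares certificate in the entries $a_{\{i,j\}},b_{\{i,j\}}$ via the positive semidefinite matrices $U$ and $R$. So there is no ``paper's proof'' of the full conjecture to compare against.

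Your $r=2$ argument is correct and in fact strictly stronger than what the paper establishes: diagonalizing $A$ and reducing $c_2$ to $\sum_{p,q} b_{pq}^2\,S(\lambda_p,\lambda_q)$ with $S\ge 0$ handles \emph{every} even $m$, not just $m=6$. The method is also genuinely different from the paper's. The paper works directly in the $a_{\{i,j\}},b_{\{i,j\}}$ variables and produces a concrete SOS decomposition (yielding an effective algebraic certificate in the matrix entries, at the cost of heavy case analysis and computer verification). Your route is analytic and coordinate-wise: choosing a basis diagonalizing $A$ collapses the trace to a weighted sum of squares $b_{pq}^2$, and positivity reduces to the one-variable inequality $N(x,y)=(m-1)x^m-mx^{m-1}y+y^m\ge 0$ for even $m$, which you verify by a first-derivative argument. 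What your approach buys is brevity and the full range of $m$; what the paper's approach buys is an explicit SOS representation in the original entries, which your diagonalization argument does not directly provide.

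For $r\ge 4$ your proposal is, by your own assessment, a plan rather than a proof: the anti-diagonal sum $\sum_{k+k'=r}\langle C_k,C_{k'}\rangle_F$ is not controlled, and you correctly note that positive semidefiniteness of the Gram matrix alone cannot do it. This is not a flaw relative to the paper---the paper does not address $r\ge 4$ either---but it means the full conjecture remains open in your write-up just as it does in the literature.
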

The question was a variant of the following question (in the version stated below, given by Lieb and Seiringer in~\cite{LiebSeiringer}
\begin{conjecture}\label{conjecture:LiebSeiringer}
If $A$ and $B$ are positive semidefinite matrices of the same size, then for all integers $m \geq r \geq 0$, the coefficient of $t^r$ in $\mathsf{trace}((A+tB)^m)$ is non-negative.
\end{conjecture}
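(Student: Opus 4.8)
The plan is to reduce the statement to an analytic one and attack that, rather than to hunt for a single uniform algebraic certificate. First I would record that the coefficient of $t^r$ in $\mathsf{trace}((A+tB)^m)$ equals $\mathsf{trace}(S_{m,r}(A,B))$, where $S_{m,r}(A,B)$ denotes the sum of all $\binom{m}{r}$ words of length $m$ in the non-commuting symbols $A$ and $B$ that use exactly $r$ copies of $B$. The boundary cases $r=0$ and $r=m$ are immediate, since $\mathsf{trace}(A^m)\ge 0$ and $\mathsf{trace}(B^m)\ge 0$ when $A,B\succeq 0$. The natural first attempt, in the spirit of this paper, is to seek for every admissible pair $(m,r)$ a representation $\mathsf{trace}(S_{m,r}(A,B))=\sum_i \mathsf{trace}(W_i^{\ast}W_i)$ with each $W_i$ a polynomial in the positive semidefinite square roots $A^{1/2}$ and $B^{1/2}$, so that each summand equals $\lVert W_i\rVert_F^2\ge 0$ and nonnegativity becomes manifest; this is exactly what succeeds for $m=6$, $r=2$.

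The obstacle to pushing this through is that it cannot work uniformly: by known results on trace-positive polynomials (Klep--Schweighofer, Hillar), for certain pairs $(m,r)$ the polynomial $\mathsf{trace}(S_{m,r}(A,B))$ is \emph{not} a sum of Hermitian squares, even after reduction by commutators. Hence no finite sum-of-squares or semidefinite certificate --- the engine of the present paper --- can settle all of Conjecture~\ref{conjecture:LiebSeiringer}, and the proof must leave the polynomial framework.

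I would therefore pass to the equivalent exponential formulation via the Lieb--Seiringer equivalence: nonnegativity of every coefficient of $\mathsf{trace}((A+tB)^m)$ for all $m$ is equivalent to the Bessis--Moussa--Villani representation, namely that for Hermitian $A$ and $B\succeq 0$ the entire function $f(z)=\mathsf{trace}(\exp(A+zB))$ becomes, under $z\mapsto -z$, the Laplace transform of a nonnegative Borel measure supported on an interval determined by the spectrum of $B$. The function $f$ is of exponential type and is real and strictly positive on $\mathbb{R}$; these structural facts, and the equivalence itself, are routine. The crux is to prove that the representing measure is nonnegative, and for this I would follow Stahl's complex-analytic method (or Eremenko's streamlined version): analyze the multivalued eigenvalue branches of $A+zB$ on their Riemann surface, control the variation of their imaginary parts as $z$ ranges over the plane, and apply Phragm\'en--Lindel\"of and Nevanlinna-type estimates to force the representing density to keep a constant sign. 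This last positivity statement is unambiguously the hard part: the word-sum reformulation, the boundary cases, the Lieb--Seiringer reduction, and the exponential-type bound are all soft, whereas controlling the global behaviour of the eigenvalue branches resists any elementary argument and is where the full weight of the analysis must be brought to bear.
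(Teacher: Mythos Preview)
The paper does not prove Conjecture~1.2 at all: it is stated there only as a conjecture, with the remark that the equivalent Bessis--Moussa--Villani statement was settled by Stahl and later simplified by Eremenko. So there is no ``paper's own proof'' to compare against; the paper's entire treatment of this conjecture is a one-line citation.

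Your proposal is an accurate survey of the known route --- reduce to the exponential BMV formulation via the Lieb--Seiringer equivalence, then invoke Stahl's complex-analytic argument --- and you are right that a uniform sum-of-squares certificate is obstructed by the Klep--Schweighofer and Hillar results. But what you have written is a plan, not a proof: the decisive step, the sign control on the representing measure via the eigenvalue branches of $A+zB$, is asserted rather than carried out. In effect you and the paper do the same thing, namely point to Stahl and Eremenko; neither supplies an independent argument. If the intent was to give a self-contained proof, the gap is the entire analytic core; if the intent was merely to indicate how the conjecture is known to be true, then your outline is correct and matches what the paper cites.
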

This conjecture is neither stronger than nor weaker than Conjecture~\ref{conjecture:CDTA} as the statement relaxes the evenness conditions on $m$ and $r$, but requires $A$ and $B$ to be positive semidefinite. Conjecture~\ref{conjecture:LiebSeiringer} was formulated by Lieb and Seiringer because it is equivalent to the following conjecture from 1975 by Bessis, Moussa, and Villani in~\cite{BessisMoussaVillani}:
\begin{statement}\label{statement:bmv-original}
Given an $n \times n$ Hermitian matrix $A$ and an $n \times n$ positive semidefinite matrix $B$, the function $t \mapsto \mathsf{trace}(\mathsf{exp}(A-tB))$ is the Laplace transform of a positive measure supported in $[0, \infty)$.
\end{statement}
This was proved by Stahl (see~\cite{Stahl:ProofBMV}) and simplified by Eremenko (see~\cite{Eremenko}). Collins, Dykema, and Torres-Ayala (see~\cite{Collins}) and Burgdorf et al.{} (see~\cite{Burgdorf:TracialMoment, Burgdorf:CyclicEquivalenceChapter}) prove the special case of Conjecture~\ref{conjecture:CDTA} when $(m,r)=(4,2)$ by expressing the coefficient of $t^r$ in $\mathsf{trace}((A+tB)^m)$ as a sum of squares of non-commutative variables $A$ and $B$.

In~\cite{KimMillerZinnel:CDTA}, the special case of Conjecture~\ref{conjecture:CDTA} when $(m,r)=(4,2)$ is proved by expressing the coefficient of $t^r$ in $\mathsf{trace}((A+tB)^m)$ as a sum of squares in the $2n + 2\binom{n}{2}$ commutative variables $a_{\{i,j\}}$ and $b_{\{i,j\}}$, the entries of $A$ and $B$. The article~\cite{KimMillerZinnel:CDTA} also examines the special case of Conjecture~\ref{conjecture:CDTA} when $(m,r)=(8,4)$ for small $n$. The focus of~\cite{KimMillerZinnel:CDTA} was on cases of Conjecture~\ref{conjecture:CDTA} when $r$ is a power of $2$ and $m=2r$. The first natural case this leaves out is $(m,r)=(6,2)$, which we examine in Theorem~\ref{theorem:main}.

\begin{theorem}\label{theorem:main}
If $A$ and $B$ are $n \times n$ symmetric matrices, then the coefficient of $t^2$ in $\mathsf{trace}((A+tB)^6)$ is a sum of squares of polynomials in the variables $a_{\{i,j\}}$ and $b_{\{i,j\}}$: there exist positive semidefinite matrices $U$ and $R$ and vectors $y$ and $z_{(i,j)}$ such that the coefficient of $t^2$ in $\mathsf{trace}((A+tB)^6)$ is equal to
\begin{equation}\label{equation:sos}
y^TUy + \sum_{1 \leq i < j \leq n} z_{(i,j)}^TRz_{(i,j)}.
\end{equation}
\end{theorem}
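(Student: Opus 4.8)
The plan is to compute the coefficient explicitly, recognize it as a manifestly nonnegative Hilbert--Schmidt Gram expression, and then reorganize that expression into the stated block form. Set $g(t)=\mathsf{trace}((A+tB)^6)$, so the coefficient of $t^2$ is $\tfrac12 g''(0)$. Differentiating $g$ twice in $t$ and using cyclicity of the trace to collapse the product-rule terms gives $g''(0)=6\sum_{j=0}^{4}\mathsf{trace}(A^jBA^{4-j}B)$, so the coefficient equals
\[
P:=3\sum_{j=0}^{4}\mathsf{trace}(A^jBA^{4-j}B)=6\,\mathsf{trace}(A^4B^2)+6\,\mathsf{trace}(A^3BAB)+3\,\mathsf{trace}(A^2BA^2B).
\]

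Now comes the key point. Because $A$ and $B$ are symmetric, each of these traces is a Hilbert--Schmidt inner product $\langle X,Y\rangle:=\mathsf{trace}(XY^{\mathsf T})$ of two matrices drawn from $e_0:=BA^2$, $e_1:=ABA$, $e_2:=A^2B$; concretely $\mathsf{trace}(A^4B^2)=\langle e_0,e_0\rangle=\langle e_2,e_2\rangle$, $\mathsf{trace}(ABA^3B)=\langle e_0,e_1\rangle$, $\mathsf{trace}(A^2BA^2B)=\langle e_1,e_1\rangle$, and $\mathsf{trace}(A^3BAB)=\langle e_1,e_2\rangle$. Summing the five resulting terms of $\tfrac13 P$ and completing squares,
\[
\tfrac13P=\langle e_0,e_0\rangle+\langle e_0,e_1\rangle+\langle e_1,e_1\rangle+\langle e_1,e_2\rangle+\langle e_2,e_2\rangle=\tfrac12\bigl(\|e_0\|^2+\|e_0+e_1\|^2+\|e_1+e_2\|^2+\|e_2\|^2\bigr),
\]
where $\|\cdot\|$ is the Frobenius norm. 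Hence $P=\tfrac32\bigl(\|A^2B\|^2+\|A^2B+ABA\|^2+\|ABA+BA^2\|^2+\|BA^2\|^2\bigr)$, and since every entry of $A^2B$, $ABA$, $BA^2$ is a polynomial of bidegree $(2,1)$ in the variables $a_{\{i,j\}}$, $b_{\{i,j\}}$, this is already visibly a sum of squares of such polynomials.

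It remains to package this decomposition in the form demanded by the statement, by separating diagonal from off-diagonal entries in the four Frobenius norms. On the diagonal one checks $(A^2B)_{ii}=(BA^2)_{ii}$ as polynomials, so the diagonal contribution is a sum over $i$ of one fixed positive-definite quadratic form in the pair $\bigl((A^2B)_{ii},(ABA)_{ii}\bigr)$; stacking these $n$ pairs into a vector $y$ and taking $U$ block-diagonal with the corresponding $2\times 2$ Gram block $3(\begin{smallmatrix}2&1\\1&1\end{smallmatrix})$ produces the term $y^{\mathsf T}Uy$. Off the diagonal, the relations $(A^2B)_{ji}=(BA^2)_{ij}$ and $(ABA)_{ji}=(ABA)_{ij}$ show that the combined contribution of the pair of positions $(i,j)$ and $(j,i)$ is one fixed positive-definite ternary form in $z_{(i,j)}:=\bigl((A^2B)_{ij},(ABA)_{ij},(BA^2)_{ij}\bigr)$, with Gram matrix $R=3(\begin{smallmatrix}2&1&0\\1&2&1\\0&1&2\end{smallmatrix})$ independent of $\{i,j\}$ (this independence is exactly the $S_n$-equivariance of the construction). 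Both $U$ and $R$ are positive definite by Sylvester's criterion, and summing the pieces gives $P=y^{\mathsf T}Uy+\sum_{1\le i<j\le n}z_{(i,j)}^{\mathsf T}Rz_{(i,j)}$, as asserted.

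The step I expect to be the crux is the middle one: a priori $\sum_j\mathsf{trace}(A^jBA^{4-j}B)$ admits only an ``analytic'' proof of nonnegativity --- diagonalizing $A$ turns it into $\sum_{i,k}\tfrac{\lambda_i^5-\lambda_k^5}{\lambda_i-\lambda_k}B_{ik}^2\ge 0$ --- and the real content is noticing that for symmetric arguments this quantity is a genuine Gram matrix built from the three matrices $A^2B,ABA,BA^2$; equivalently, that the nonnegative binary quartic $(x^5-y^5)/(x-y)$ has a sum-of-squares decomposition into binary quadratics whose ``matricization'' $x^ay^b\mapsto A^aBA^b$ is compatible with the trace. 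Everything afterward is bookkeeping. I also note that this argument is not special to the exponent $6$: for every even $m$ the same computation expresses the coefficient of $t^2$ in $\mathsf{trace}((A+tB)^m)$ as $\tfrac m4\bigl(\|e_0\|^2+\sum_{a=0}^{d-1}\|e_a+e_{a+1}\|^2+\|e_d\|^2\bigr)$ with $d=(m-2)/2$ and $e_a=A^aBA^{\,d-a}$, and one could alternatively locate (or double-check) $U$ and $R$ by solving the small symmetry-reduced semidefinite program attached to the $S_n$-action on the degree-$(2,1)$ monomials.
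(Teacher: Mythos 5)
Your proof is correct, and it takes a genuinely different --- and far more economical --- route than the paper's. The paper defines explicit matrices $U$ (of size $3\binom{n}{3}+n(n-1)+n^2$) and $R$ (of size $3n^2-2n$) indexed by degree-$(2,1)$ monomials, verifies the identity \eqref{equation:sos} by an exhaustive, computer-aided match over all $169$ monomial types, and then proves positive semidefiniteness of $R$ via a generalized Schur complement and of $U$ by exhibiting a degree-$4$ annihilating polynomial and applying Descartes' rule of signs. You instead observe that the coefficient equals $3\sum_{j=0}^{4}\mathsf{trace}(A^jBA^{4-j}B)$ and that, for symmetric $A,B$, this is the Gram expression $3(\|e_0\|^2+\langle e_0,e_1\rangle+\|e_1\|^2+\langle e_1,e_2\rangle+\|e_2\|^2)$ in $e_a=A^aBA^{2-a}$ under the Frobenius inner product, which telescopes into $\tfrac32\bigl(\|e_0\|^2+\|e_0+e_1\|^2+\|e_1+e_2\|^2+\|e_2\|^2\bigr)$; I verified the cyclic word count, the completion of squares, the $n=1$ sanity check, and your $2\times2$ and $3\times3$ Gram blocks $3\left(\begin{smallmatrix}2&1\\1&1\end{smallmatrix}\right)$ and $3\left(\begin{smallmatrix}2&1&0\\1&2&1\\0&1&2\end{smallmatrix}\right)$, and all are right. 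Since the theorem only asserts the existence of some positive semidefinite $U,R$ and vectors $y,z_{(i,j)}$, your much smaller choices are admissible even though they are not the paper's. What your argument buys is brevity, independence from computer verification, and an immediate generalization to the coefficient of $t^2$ in $\mathsf{trace}((A+tB)^m)$ for every even $m$: it is the matricization of the scalar identity $2(x^4+x^3y+x^2y^2+xy^3+y^4)=x^4+(x^2+xy)^2+(xy+y^2)^2+y^4$, in the same spirit as the non-commutative sum-of-squares proof of the $(4,2)$ case cited in the introduction. What the paper's approach buys is explicit $\mathfrak{S}_n$-equivariant certificates in the full monomial basis, whose spectral structure (minimal polynomial, eigenvalues, eigenvectors) the authors analyze in detail and which fit their broader programme for other pairs $(m,r)$ where such a shortcut is not available.
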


In our proof of Theorem 2.1 we use well known results (see, e.g.,~\cite{Laurent, Prajna, Riener}) to show, given appropriate matrices and vectors, $\mathsf{trace}((A+tB)^6)$ is non-negative. In our proof we will use two matrices $U$ and $R$, along with vectors $y$ and $z_{(i,j)}$ to prove each term from the expression above, $y^TUy,\: z_{(1,2)}^TRz_{(1,2)},\ldots ,\: z_{(n-1,n)}^TRz_{(n-1,n)}$ is non-negative, therefore proving the whole expression non-negative. 

Three large questions arise that will be answered in the following three sections.
We set out to provide index based descriptions of all previously mentioned matrices and vectors in Section~\ref{section:defMatrices&Vecs}. In Section~\ref{section:relevance} we prove $y^TUy + \sum_{1 \leq i < j \leq n} z_{(i,j)}^TRz_{(i,j)}$ equals the coefficient of $t^2$ in $\mathsf{trace}((A+tB)^6)$ by exhaustion. In Section~\ref{section:psdRn} we provide a proof that $R$ is positive semidefinite using the generalized Schur complement and other various methods. In Section~\ref{section:psdU} we provide descriptions of $U, U^2, U^3,$ and $U^4$ to show $U$ satisfies a polynomial $\mu$, which is a multiple of the minimum polynomial of $U$. We then use Descartes' Law of Signs so show $\mu$ has no negative roots, therefore proving $U$ is positive semidefinite. 

\subsection{Notation for block constant matrices}\label{section:block-constant-matrices}

When defining the matrix $R$ and subsequent analysis, it will be helpful to introduce notation for block matrices which have constant value within each block. Throughout this paper, whenever a non-negative integer appears outside the presentation of what appears to be a small matrix, such a row heading (respectively, column heading) indicates the number of rows (respectively, columns) in a block, while the entry within the brackets indicates the entry throughout the block. For example,
\[
\begin{bNiceArray}{cc}[first-row,first-col]
& 3 & 4  \\
1 & 5 & 6 \\
2 & 7 & 8
\end{bNiceArray}
=
\left[
\begin{array}{ccccccc}
5 & 5 & 5 & 6 & 6 & 6 & 6 \\
7 & 7 & 7 & 8 & 8 & 8 & 8 \\
7 & 7 & 7 & 8 & 8 & 8 & 8 
\end{array}
\right]
.\]

\section{Definitions of relevant matrices and vectors}\label{section:defMatrices&Vecs}

Throughout the following section we define vectors $y$ and $z_{(i,j)}$, and matrices $U$ and $R$.

\subsection{Definition of $y$}
In this section, we define a vector $y$ of size $3 \cdot \binom{n}{3} + n(n-1)+n^2$. The vector $y$ is partitioned into three blocks. The first block has $3\cdot \binom{n}{3}$ entries indexed by all possible ordered pairs of the form $(T,\ell)$ where $T$ is a $3$-element subset of $[n]$ and $\ell \in T$. The monomial at index $(\{i,j,k\},\ell)$ is $a_{\{\ell,j\}}a_{\{\ell,k\}}b_{\{j,k\}}$. 
The second block of the vector $y$ has $n(n-1)$ entries indexed by all possible ordered pairs $(i,j)$ such that $i\neq j$ and $i,j\leq n$. The monomial at index $(i,j)$ is $a_{\{i,j\}}a_{\{i,i\}}b_{\{i,j\}}$.
The third block of the vector $y$ has $n^2$ entries indexed by all possible ordered pairs $(\hat{i}, \hat{j})$, where $\hat{i},\hat{j}\leq n$. The monomial at index $(\hat{i}, \hat{j})$ is $a_{\{\hat{i},\hat{j}\}}a_{\{\hat{i},\hat{j}\}}b_{\{\hat{i},\hat{i}\}}$.

\subsection{Definition of $U$}\label{section:defU}

We now define a symmetric block matrix $U$ with $3 \cdot \binom{n}{3} + n(n-1)+n^2$ rows and columns, of the form
\[ U = \left[\begin{array}{ccc}
S_1 & S_{12} & S_{13} \\
{S_{12}}^T & S_2 & S_{23} \\
{S_{13}}^T & {S_{23}}^T & S_3
\end{array}
\right]
.\]

Six submatrices are described below, with the remaining three blocks below the block diagonal automatically given by transposition. Each submatrix is indexed the same way as the vector $y$. Moreover, the blocks in $y$ determine the submatrices of $U$. 

\subsubsection{The $S_1$ submatrix}

The entry in the $(\{i,j,k\},\ell)$-row $(\{i',j',k'\},\ell')$-column of $S_1$ is $6(|\{i,j,k\} \cap \{i',j',k'\}| + |\{\ell\} \cap \{\ell'\}|)$, where $\ell \in \{i,j,k\}$ and $\ell' \in \{i',j',k'\}$.

\subsubsection{The $S_2$ submatrix}

The entry in the $(i,j)$-row $(i',j')$-column of $S_2$ is $6|\{j\}\cap \{j'\}|+30|\{i\}\cap \{i'\}|+12|\{i\}\cap \{j'\}|+12|\{j\}\cap \{i'\}|$.

\subsubsection{The $S_3$ submatrix}

The entry in the $(\hat{i},\hat{j})$-row $(\hat{i}',\hat{j}')$-column of $S_3$ is:
\begin{itemize}
\item $3(|\{\hat{i},\hat{j}\} \cap \{\hat{i}',\hat{j}'\}| + |\{\hat{i}\} \cap \{\hat{i}'\}|)$ if $\hat{i} \ne \hat{j}$ and $\hat{i}' \ne \hat{j}'$
\item $3(3|\{\hat{i}\} \cap \{\hat{i}'\}| + 2|\{\hat{j}\} \cap \{\hat{j}'\}|)$ if $\hat{i} =\hat{j}$ or $\hat{i}'= \hat{j}'$
\end{itemize}

\subsubsection{The $S_{12}$ submatrix}

The rows of $S_{12}$ are indexed as in $S_1$, with a typical index being of the form $(\{i,j,k\},\ell)$, where $\ell \in \{i,j,k\}$. The columns of $S_{12}$ are indexed as in $S_2$ with a typical index being of the form $(i',j')$ with $i' \not= j'$.
The entry in row $(\{i,j,k\},\ell)$ column $(i',j')$ is $12|\{i'\} \cap \{i,j,k\}| + 6|\{j'\} \cap \{i,j,k\}| + 6|\{i'\} \cap \{\ell\}|$.

\subsubsection{The $S_{13}$ submatrix}

The rows of $S_{13}$ are indexed as in $S_1$, with a typical index being of the form $(\{i,j,k\},\ell)$, where $\ell \in \{i,j,k\}$. The columns of $S_{13}$ are indexed as in $S_2$ with a typical index being of the form $(\hat{i},\hat{j})$. The entry in row $(\{i,j,k\},\ell)$ column $(\hat{i},\hat{j})$ is $6|\{\hat{i}\} \cap \{i,j,k\}| + 3|\{\hat{j}\} \cap \{i,j,k\}| + 3|\{\hat{j}\} \cap \{\ell\}|$.

\subsubsection{The $S_{23}$ submatrix}

The rows of $S_{23}$ are indexed as in $S_2$, with a typical index being of the form $(i,j)$ with $i \not= j$. The columns of $S_{23}$ are indexed as in $S_3$, with a typical index being of the form $(\hat{i},\hat{j})$. The entry in the $(i,j)$ row $(\hat{i},\hat{j})$ column of $S_{23}$ is:
\begin{itemize}
\item $15$ if $i=\hat{i}$ and $j=\hat{j}$
\item Otherwise, $21$ if $i=\hat{i}$ and $i=\hat{j}$
\item Otherwise, $12$ if $i=\hat{i}$ and $i \ne j$ and $j\ne \hat{i}$ and $i\ne \hat{j}$
\item Otherwise, $15$ if $j=\hat{i}$ and $i=\hat{j}$
\item Otherwise, $9$ if $i=\hat{j}$ and $j \ne \hat{i}$ and $i \ne \hat{i}$
\item Otherwise, $9$ if $\hat{i} = \hat{j}$ and $j= \hat{i}$
\item Otherwise, $6$ if $j= \hat{i}$ and $i \neq \hat{j}$ and $j \neq \hat{j}$
\item Otherwise, $3$ if $j = \hat{j}$ and $\hat{i} \neq \hat{j}$
\item Otherwise $0$.
\end{itemize}

\subsection{Definition of $z_{(i,j)}$}

Let $z_{(i,j)}$ be a vector of size $3n^2 - 2n$, broken up into six blocks as described below.
\begin{itemize}
    \item The first block contains a single monomial $a_{\{i,j\}}^2b_{\{i,j\}}$, with index $(1)$.

    \item The second block has size $n-1$, and consists of the single monomial $a_{\{i,i\}}a_{\{i,j\}}b_{\{i,i\}}$ with index $(2,1)$, followed by $n-2$ monomials of the form $a_{\{i,j\}}a_{\{i,k\}}b_{\{i,k\}}$ with index $(2,2,k)$, where $k\leq n$ and $k \neq i,j$.

    \item The third block has the $n-1$ monomials of the form $a_{\{i,j\}}a_{\{j,k\}}b_{\{j,k\}}$ with index $(3,k)$, where $k\leq n$ and $k\neq i$.

    \item The fourth block has size $(n-1)^2$, and consists of the single monomial $a_{\{i,i\}}a_{\{j,j\}}b_{\{i,j\}}$ with index $(4,1)$, followed by $n-2$ monomials of the form $a_{\{i,i\}}a_{\{j,k\}}b_{\{i,k\}}$ with index $(4,2,k)$, then $n-2$ monomials of the form $a_{\{i,k\}}a_{\{j,j\}}b_{\{j,k\}}$ with index $(4,3,k)$, then $n-2$ monomials of the form $a_{\{i,k\}}a_{\{j,k\}}b_{\{k,k\}}$ with index $(4,4,k)$. Finally, $n^2 - 5n + 6$ monomials of the form $a_{\{i,k\}}a_{\{j,l\}}b_{\{k,l\}}$ with index $(4,5,k,l)$, where $k\neq i,j,l$ and $l\neq i,j,k$. 

    \item The fifth block has size $n(n-1)$, starting with monomials $a_{\{i,i\}}a_{\{i,i\}}b_{\{i,j\}}$ with index $(5,1)$ and $a_{\{i,i\}}a_{\{i,j\}}b_{\{j,j\}}$ with index $(5,2)$. Then follow $n-2$ monomials of the form $a_{\{i,i\}}a_{\{i,k\}}b_{\{k,j\}}$ with index $(5,3,k)$, then $n-2$ monomials of the form $a_{\{i,k\}}a_{\{i,k\}}b_{\{i,j\}}$ with index $(5,4,k)$, then $n-2$ monomials of the form $a_{\{i,k\}}a_{\{j,k\}}b_{\{j,j\}}$ with index $(5,5,k)$, and $n-2$ monomials of the form $a_{\{i,k\}}a_{\{k,k\}}b_{\{j,k\}}$ with index $(5,6,k)$, where $k \neq i,j$. Finally, there are $n^2 - 5n + 6$ monomials of the form $a_{\{i,k\}}a_{\{k,l\}}b_{\{j,l\}}$ with index $(5,7,k,l)$, where $k\neq i,j,l$ and $l\neq i,j,k$. 

    \item The sixth block has size $n(n-1)$, starting with monomials $a_{\{j,j\}}a_{\{i,j\}}b_{\{i,i\}}$ with index $(6,1)$ and $a_{\{j,j\}}a_{\{j,j\}}b_{\{i,j\}}$ with index $(6,2)$. Then follow $n-2$ monomials of the form $a_{\{j,j\}}a_{\{k,j\}}b_{\{i,k\}}$ with index $(6,3,k)$, then $n-2$ monomials of the form $a_{\{j,k\}}a_{\{i,k\}}b_{\{i,i\}}$ with index $(6,4,k)$, then $n-2$ monomials of the form $a_{\{j,k\}}a_{\{j,k\}}b_{\{i,j\}}$ with index $(6,5,k)$, and $n-2$ monomials of the form $a_{\{j,k\}}a_{\{k,k\}}b_{\{i,k\}}$ with index $(6,6,k)$, where $k \neq i,j$. Finally, there are $n^2 - 5n + 6$ monomials of the form $a_{\{j,k\}}a_{\{k,l\}}b_{\{i,l\}}$ with index $(6,7,k,l)$, where $k\neq i,j,l$ and $l\neq i,j,k$. 

\end{itemize}
For our use of the index language, in what follows, we will write, for example, the $(2,2,3)$-entry of $z_{(1,2)}$ is the monomial $a_{\{1,2\}}a_{\{1,3\}}b_{\{2,3\}}$.

\subsection{Definition of $R$}

Just as the indexing of the vector $y$ determined $U$, the blocks of $z_{(i,j)}$ determine $R$. The description of $R$ is simple compared to $U$. Using the notation introduced in Section~\ref{section:block-constant-matrices}, for a fixed $n$, the matrix $R$ is the $(3n^2-2n) \times (3n^2-2n)$ symmetric matrix
\[
\begin{bNiceArray}{cccccc}[first-row,first-col]
& 1 & n-1 & n-1 & (n-1)^2 & n(n-1) & n(n-1) \\
1 & 30& 21& 21& 12& 9& 9\\
n-1 & 21& 18& 12& 9& 3& 9\\
n-1 & 21& 12& 18& 9& 9& 3\\
(n-1)^2 & 12& 9& 9& 6& 3& 3\\
n(n-1) & 9& 3& 9& 3& 6& 0\\
n(n-1) & 9& 9& 3& 3& 0& 6
\end{bNiceArray}
.
\]

\newpage

\subsection{Examples of matrices and vectors for $n=4$}

\begin{multicols}{2}

$$y=\left[\begin{array}{cccccccccccccccccccccccccccccccccccccccc}a_{\{1,2\}} a_{\{1,3\}} b_{\{2,3\}}\\     a_{\{1,2\}} a_{\{2,3\}} b_{\{1,3\}}\\     a_{\{1,3\}} a_{\{2,3\}} b_{\{1,2\}}\\     a_{\{1,2\}} a_{\{1,4\}} b_{\{2,4\}}\\     a_{\{1,2\}} a_{\{2,4\}} b_{\{1,4\}}\\     a_{\{1,4\}} a_{\{2,4\}} b_{\{1,2\}}\\     a_{\{1,3\}} a_{\{1,4\}} b_{\{3,4\}}\\     a_{\{1,3\}} a_{\{3,4\}} b_{\{1,4\}}\\     a_{\{1,4\}} a_{\{3,4\}} b_{\{1,3\}}\\     a_{\{2,3\}} a_{\{2,4\}} b_{\{3,4\}}\\     a_{\{2,3\}} a_{\{3,4\}} b_{\{2,4\}}\\     a_{\{2,4\}} a_{\{3,4\}} b_{\{2,3\}}\\     a_{\{1,1\}} a_{\{1,2\}} b_{\{1,2\}}\\     a_{\{1,1\}} a_{\{1,3\}} b_{\{1,3\}}\\     a_{\{1,1\}} a_{\{1,4\}} b_{\{1,4\}}\\     a_{\{1,2\}} a_{\{2,2\}} b_{\{1,2\}}\\     a_{\{2,2\}} a_{\{2,3\}} b_{\{2,3\}}\\     a_{\{2,2\}} a_{\{2,4\}} b_{\{2,4\}}\\     a_{\{1,3\}} a_{\{3,3\}} b_{\{1,3\}}\\     a_{\{2,3\}} a_{\{3,3\}} b_{\{2,3\}}\\     a_{\{3,3\}} a_{\{3,4\}} b_{\{3,4\}}\\     a_{\{1,4\}} a_{\{4,4\}} b_{\{1,4\}}\\     a_{\{2,4\}} a_{\{4,4\}} b_{\{2,4\}}\\     a_{\{3,4\}} a_{\{4,4\}} b_{\{3,4\}}\\     a_{\{1,1\}}^{2} b_{\{1,1\}}\\     a_{\{1,2\}}^{2} b_{\{1,1\}}\\     a_{\{1,3\}}^{2} b_{\{1,1\}}\\     a_{\{1,4\}}^{2} b_{\{1,1\}}\\     a_{\{1,2\}}^{2} b_{\{2,2\}}\\     a_{\{2,2\}}^{2} b_{\{2,2\}}\\     a_{\{2,3\}}^{2} b_{\{2,2\}}\\     a_{\{2,4\}}^{2} b_{\{2,2\}}\\     a_{\{1,3\}}^{2} b_{\{3,3\}}\\     a_{\{2,3\}}^{2} b_{\{3,3\}}\\     a_{\{3,3\}}^{2} b_{\{3,3\}}\\     a_{\{3,4\}}^{2} b_{\{3,3\}}\\     a_{\{1,4\}}^{2} b_{\{4,4\}}\\     a_{\{2,4\}}^{2} b_{\{4,4\}}\\     a_{\{3,4\}}^{2} b_{\{4,4\}}\\     a_{\{4,4\}}^{2} b_{\{4,4\}}\end{array}\right]$$

\columnbreak

$$z_{(1,2)} = \left[\begin{array}{cccccccccccccccccccccccccccccccccccccccc}a_{\{1,2\}}^{2} b_{\{1,2\}}\\     a_{\{1,1\}} a_{\{1,2\}} b_{\{1,1\}}\\     a_{\{1,2\}} a_{\{1,3\}} b_{\{1,3\}}\\     a_{\{1,2\}} a_{\{1,4\}} b_{\{1,4\}}\\     a_{\{1,2\}} a_{\{2,2\}} b_{\{2,2\}}\\     a_{\{1,2\}} a_{\{2,3\}} b_{\{2,3\}}\\     a_{\{1,2\}} a_{\{2,4\}} b_{\{2,4\}}\\     a_{\{1,1\}} a_{\{2,2\}} b_{\{1,2\}}\\     a_{\{1,1\}} a_{\{2,3\}} b_{\{1,3\}}\\     a_{\{1,1\}} a_{\{2,4\}} b_{\{1,4\}}\\     a_{\{1,3\}} a_{\{2,2\}} b_{\{2,3\}}\\     a_{\{1,4\}} a_{\{2,2\}} b_{\{2,4\}}\\     a_{\{1,3\}} a_{\{2,3\}} b_{\{3,3\}}\\     a_{\{1,4\}} a_{\{2,4\}} b_{\{4,4\}}\\     a_{\{1,3\}} a_{\{2,4\}} b_{\{3,4\}}\\     a_{\{1,4\}} a_{\{2,3\}} b_{\{3,4\}}\\     a_{\{1,1\}}^{2} b_{\{1,2\}}\\     a_{\{1,1\}} a_{\{1,2\}} b_{\{2,2\}}\\     a_{\{1,1\}} a_{\{1,3\}} b_{\{2,3\}}\\     a_{\{1,1\}} a_{\{1,4\}} b_{\{2,4\}}\\     a_{\{1,3\}}^{2} b_{\{1,2\}}\\     a_{\{1,4\}}^{2} b_{\{1,2\}}\\     a_{\{1,3\}} a_{\{2,3\}} b_{\{2,2\}}\\     a_{\{1,4\}} a_{\{2,4\}} b_{\{2,2\}}\\     a_{\{1,3\}} a_{\{3,3\}} b_{\{2,3\}}\\     a_{\{1,4\}} a_{\{4,4\}} b_{\{2,4\}}\\     a_{\{1,3\}} a_{\{3,4\}} b_{\{2,4\}}\\     a_{\{1,4\}} a_{\{3,4\}} b_{\{2,3\}}\\     a_{\{1,2\}} a_{\{2,2\}} b_{\{1,1\}}\\     a_{\{2,2\}}^{2} b_{\{1,2\}}\\     a_{\{2,2\}} a_{\{2,3\}} b_{\{1,3\}}\\     a_{\{2,2\}} a_{\{2,4\}} b_{\{1,4\}}\\     a_{\{1,3\}} a_{\{2,3\}} b_{\{1,1\}}\\     a_{\{1,4\}} a_{\{2,4\}} b_{\{1,1\}}\\     a_{\{2,3\}}^{2} b_{\{1,2\}}\\     a_{\{2,4\}}^{2} b_{\{1,2\}}\\     a_{\{2,3\}} a_{\{3,3\}} b_{\{1,3\}}\\     a_{\{2,4\}} a_{\{4,4\}} b_{\{1,4\}}\\     a_{\{2,3\}} a_{\{3,4\}} b_{\{1,4\}}\\     a_{\{2,4\}} a_{\{3,4\}} b_{\{1,3\}}\end{array}\right]$$

\end{multicols}

{\setlength{\arraycolsep}{.05cm} $$U = \tiny \left[\begin{array}{cccccccccccccccccccccccccccccccccccccccc}
24 & 18 & 18 & 18 & 12 & 12 & 18 & 12 & 12 & 12 & 12 & 12 & 24 & 24 & 18 & 18 & 18 & 12 & 18 & 18 & 12 & 6 & 6 & 6 & 12 & 9 & 9 & 6 & 12 & 9 & 9 & 6 & 12 & 9 & 9 & 6 & 6 & 3 & 3 & 0 \\
18 & 24 & 18 & 12 & 18 & 12 & 12 & 12 & 12 & 18 & 12 & 12 & 18 & 18 & 12 & 24 & 24 & 18 & 18 & 18 & 12 & 6 & 6 & 6 & 9 & 12 & 9 & 6 & 9 & 12 & 9 & 6 & 9 & 12 & 9 & 6 & 3 & 6 & 3 & 0 \\
18 & 18 & 24 & 12 & 12 & 12 & 12 & 18 & 12 & 12 & 18 & 12 & 18 & 18 & 12 & 18 & 18 & 12 & 24 & 24 & 18 & 6 & 6 & 6 & 9 & 9 & 12 & 6 & 9 & 9 & 12 & 6 & 9 & 9 & 12 & 6 & 3 & 3 & 6 & 0 \\
18 & 12 & 12 & 24 & 18 & 18 & 18 & 12 & 12 & 12 & 12 & 12 & 24 & 18 & 24 & 18 & 12 & 18 & 6 & 6 & 6 & 18 & 18 & 12 & 12 & 9 & 6 & 9 & 12 & 9 & 6 & 9 & 6 & 3 & 0 & 3 & 12 & 9 & 6 & 9 \\
12 & 18 & 12 & 18 & 24 & 18 & 12 & 12 & 12 & 18 & 12 & 12 & 18 & 12 & 18 & 24 & 18 & 24 & 6 & 6 & 6 & 18 & 18 & 12 & 9 & 12 & 6 & 9 & 9 & 12 & 6 & 9 & 3 & 6 & 0 & 3 & 9 & 12 & 6 & 9 \\
12 & 12 & 12 & 18 & 18 & 24 & 12 & 12 & 18 & 12 & 12 & 18 & 18 & 12 & 18 & 18 & 12 & 18 & 6 & 6 & 6 & 24 & 24 & 18 & 9 & 9 & 6 & 12 & 9 & 9 & 6 & 12 & 3 & 3 & 0 & 6 & 9 & 9 & 6 & 12 \\
18 & 12 & 12 & 18 & 12 & 12 & 24 & 18 & 18 & 12 & 12 & 12 & 18 & 24 & 24 & 6 & 6 & 6 & 18 & 12 & 18 & 18 & 12 & 18 & 12 & 6 & 9 & 9 & 6 & 0 & 3 & 3 & 12 & 6 & 9 & 9 & 12 & 6 & 9 & 9 \\
12 & 12 & 18 & 12 & 12 & 12 & 18 & 24 & 18 & 12 & 18 & 12 & 12 & 18 & 18 & 6 & 6 & 6 & 24 & 18 & 24 & 18 & 12 & 18 & 9 & 6 & 12 & 9 & 3 & 0 & 6 & 3 & 9 & 6 & 12 & 9 & 9 & 6 & 12 & 9 \\
12 & 12 & 12 & 12 & 12 & 18 & 18 & 18 & 24 & 12 & 12 & 18 & 12 & 18 & 18 & 6 & 6 & 6 & 18 & 12 & 18 & 24 & 18 & 24 & 9 & 6 & 9 & 12 & 3 & 0 & 3 & 6 & 9 & 6 & 9 & 12 & 9 & 6 & 9 & 12 \\
12 & 18 & 12 & 12 & 18 & 12 & 12 & 12 & 12 & 24 & 18 & 18 & 6 & 6 & 6 & 18 & 24 & 24 & 12 & 18 & 18 & 12 & 18 & 18 & 0 & 6 & 3 & 3 & 6 & 12 & 9 & 9 & 6 & 12 & 9 & 9 & 6 & 12 & 9 & 9 \\
12 & 12 & 18 & 12 & 12 & 12 & 12 & 18 & 12 & 18 & 24 & 18 & 6 & 6 & 6 & 12 & 18 & 18 & 18 & 24 & 24 & 12 & 18 & 18 & 0 & 3 & 6 & 3 & 6 & 9 & 12 & 9 & 6 & 9 & 12 & 9 & 6 & 9 & 12 & 9 \\
12 & 12 & 12 & 12 & 12 & 18 & 12 & 12 & 18 & 18 & 18 & 24 & 6 & 6 & 6 & 12 & 18 & 18 & 12 & 18 & 18 & 18 & 24 & 24 & 0 & 3 & 3 & 6 & 6 & 9 & 9 & 12 & 6 & 9 & 9 & 12 & 6 & 9 & 9 & 12 \\
24 & 18 & 18 & 24 & 18 & 18 & 18 & 12 & 12 & 6 & 6 & 6 & 36 & 30 & 30 & 24 & 12 & 12 & 12 & 6 & 0 & 12 & 6 & 0 & 21 & 15 & 12 & 12 & 15 & 9 & 6 & 6 & 9 & 3 & 0 & 0 & 9 & 3 & 0 & 0 \\
24 & 18 & 18 & 18 & 12 & 12 & 24 & 18 & 18 & 6 & 6 & 6 & 30 & 36 & 30 & 12 & 6 & 0 & 24 & 12 & 12 & 12 & 0 & 6 & 21 & 12 & 15 & 12 & 9 & 0 & 3 & 0 & 15 & 6 & 9 & 6 & 9 & 0 & 3 & 0 \\
18 & 12 & 12 & 24 & 18 & 18 & 24 & 18 & 18 & 6 & 6 & 6 & 30 & 30 & 36 & 12 & 0 & 6 & 12 & 0 & 6 & 24 & 12 & 12 & 21 & 12 & 12 & 15 & 9 & 0 & 0 & 3 & 9 & 0 & 0 & 3 & 15 & 6 & 6 & 9 \\
18 & 24 & 18 & 18 & 24 & 18 & 6 & 6 & 6 & 18 & 12 & 12 & 24 & 12 & 12 & 36 & 30 & 30 & 6 & 12 & 0 & 6 & 12 & 0 & 9 & 15 & 6 & 6 & 15 & 21 & 12 & 12 & 3 & 9 & 0 & 0 & 3 & 9 & 0 & 0 \\
18 & 24 & 18 & 12 & 18 & 12 & 6 & 6 & 6 & 24 & 18 & 18 & 12 & 6 & 0 & 30 & 36 & 30 & 12 & 24 & 12 & 0 & 12 & 6 & 0 & 9 & 3 & 0 & 12 & 21 & 15 & 12 & 6 & 15 & 9 & 6 & 0 & 9 & 3 & 0 \\
12 & 18 & 12 & 18 & 24 & 18 & 6 & 6 & 6 & 24 & 18 & 18 & 12 & 0 & 6 & 30 & 30 & 36 & 0 & 12 & 6 & 12 & 24 & 12 & 0 & 9 & 0 & 3 & 12 & 21 & 12 & 15 & 0 & 9 & 0 & 3 & 6 & 15 & 6 & 9 \\
18 & 18 & 24 & 6 & 6 & 6 & 18 & 24 & 18 & 12 & 18 & 12 & 12 & 24 & 12 & 6 & 12 & 0 & 36 & 30 & 30 & 6 & 0 & 12 & 9 & 6 & 15 & 6 & 3 & 0 & 9 & 0 & 15 & 12 & 21 & 12 & 3 & 0 & 9 & 0 \\
18 & 18 & 24 & 6 & 6 & 6 & 12 & 18 & 12 & 18 & 24 & 18 & 6 & 12 & 0 & 12 & 24 & 12 & 30 & 36 & 30 & 0 & 6 & 12 & 0 & 3 & 9 & 0 & 6 & 9 & 15 & 6 & 12 & 15 & 21 & 12 & 0 & 3 & 9 & 0 \\
12 & 12 & 18 & 6 & 6 & 6 & 18 & 24 & 18 & 18 & 24 & 18 & 0 & 12 & 6 & 0 & 12 & 6 & 30 & 30 & 36 & 12 & 12 & 24 & 0 & 0 & 9 & 3 & 0 & 0 & 9 & 3 & 12 & 12 & 21 & 15 & 6 & 6 & 15 & 9 \\
6 & 6 & 6 & 18 & 18 & 24 & 18 & 18 & 24 & 12 & 12 & 18 & 12 & 12 & 24 & 6 & 0 & 12 & 6 & 0 & 12 & 36 & 30 & 30 & 9 & 6 & 6 & 15 & 3 & 0 & 0 & 9 & 3 & 0 & 0 & 9 & 15 & 12 & 12 & 21 \\
6 & 6 & 6 & 18 & 18 & 24 & 12 & 12 & 18 & 18 & 18 & 24 & 6 & 0 & 12 & 12 & 12 & 24 & 0 & 6 & 12 & 30 & 36 & 30 & 0 & 3 & 0 & 9 & 6 & 9 & 6 & 15 & 0 & 3 & 0 & 9 & 12 & 15 & 12 & 21 \\
6 & 6 & 6 & 12 & 12 & 18 & 18 & 18 & 24 & 18 & 18 & 24 & 0 & 6 & 12 & 0 & 6 & 12 & 12 & 12 & 24 & 30 & 30 & 36 & 0 & 0 & 3 & 9 & 0 & 0 & 3 & 9 & 6 & 6 & 9 & 15 & 12 & 12 & 15 & 21 \\
12 & 9 & 9 & 12 & 9 & 9 & 12 & 9 & 9 & 0 & 0 & 0 & 21 & 21 & 21 & 9 & 0 & 0 & 9 & 0 & 0 & 9 & 0 & 0 & 15 & 9 & 9 & 9 & 6 & 0 & 0 & 0 & 6 & 0 & 0 & 0 & 6 & 0 & 0 & 0 \\
9 & 12 & 9 & 9 & 12 & 9 & 6 & 6 & 6 & 6 & 3 & 3 & 15 & 12 & 12 & 15 & 9 & 9 & 6 & 3 & 0 & 6 & 3 & 0 & 9 & 9 & 6 & 6 & 6 & 6 & 3 & 3 & 3 & 3 & 0 & 0 & 3 & 3 & 0 & 0 \\
9 & 9 & 12 & 6 & 6 & 6 & 9 & 12 & 9 & 3 & 6 & 3 & 12 & 15 & 12 & 6 & 3 & 0 & 15 & 9 & 9 & 6 & 0 & 3 & 9 & 6 & 9 & 6 & 3 & 0 & 3 & 0 & 6 & 3 & 6 & 3 & 3 & 0 & 3 & 0 \\
6 & 6 & 6 & 9 & 9 & 12 & 9 & 9 & 12 & 3 & 3 & 6 & 12 & 12 & 15 & 6 & 0 & 3 & 6 & 0 & 3 & 15 & 9 & 9 & 9 & 6 & 6 & 9 & 3 & 0 & 0 & 3 & 3 & 0 & 0 & 3 & 6 & 3 & 3 & 6 \\
12 & 9 & 9 & 12 & 9 & 9 & 6 & 3 & 3 & 6 & 6 & 6 & 15 & 9 & 9 & 15 & 12 & 12 & 3 & 6 & 0 & 3 & 6 & 0 & 6 & 6 & 3 & 3 & 9 & 9 & 6 & 6 & 3 & 3 & 0 & 0 & 3 & 3 & 0 & 0 \\
9 & 12 & 9 & 9 & 12 & 9 & 0 & 0 & 0 & 12 & 9 & 9 & 9 & 0 & 0 & 21 & 21 & 21 & 0 & 9 & 0 & 0 & 9 & 0 & 0 & 6 & 0 & 0 & 9 & 15 & 9 & 9 & 0 & 6 & 0 & 0 & 0 & 6 & 0 & 0 \\
9 & 9 & 12 & 6 & 6 & 6 & 3 & 6 & 3 & 9 & 12 & 9 & 6 & 3 & 0 & 12 & 15 & 12 & 9 & 15 & 9 & 0 & 6 & 3 & 0 & 3 & 3 & 0 & 6 & 9 & 9 & 6 & 3 & 6 & 6 & 3 & 0 & 3 & 3 & 0 \\
6 & 6 & 6 & 9 & 9 & 12 & 3 & 3 & 6 & 9 & 9 & 12 & 6 & 0 & 3 & 12 & 12 & 15 & 0 & 6 & 3 & 9 & 15 & 9 & 0 & 3 & 0 & 3 & 6 & 9 & 6 & 9 & 0 & 3 & 0 & 3 & 3 & 6 & 3 & 6 \\
12 & 9 & 9 & 6 & 3 & 3 & 12 & 9 & 9 & 6 & 6 & 6 & 9 & 15 & 9 & 3 & 6 & 0 & 15 & 12 & 12 & 3 & 0 & 6 & 6 & 3 & 6 & 3 & 3 & 0 & 3 & 0 & 9 & 6 & 9 & 6 & 3 & 0 & 3 & 0 \\
9 & 12 & 9 & 3 & 6 & 3 & 6 & 6 & 6 & 12 & 9 & 9 & 3 & 6 & 0 & 9 & 15 & 9 & 12 & 15 & 12 & 0 & 3 & 6 & 0 & 3 & 3 & 0 & 3 & 6 & 6 & 3 & 6 & 9 & 9 & 6 & 0 & 3 & 3 & 0 \\
9 & 9 & 12 & 0 & 0 & 0 & 9 & 12 & 9 & 9 & 12 & 9 & 0 & 9 & 0 & 0 & 9 & 0 & 21 & 21 & 21 & 0 & 0 & 9 & 0 & 0 & 6 & 0 & 0 & 0 & 6 & 0 & 9 & 9 & 15 & 9 & 0 & 0 & 6 & 0 \\
6 & 6 & 6 & 3 & 3 & 6 & 9 & 9 & 12 & 9 & 9 & 12 & 0 & 6 & 3 & 0 & 6 & 3 & 12 & 12 & 15 & 9 & 9 & 15 & 0 & 0 & 3 & 3 & 0 & 0 & 3 & 3 & 6 & 6 & 9 & 9 & 3 & 3 & 6 & 6 \\
6 & 3 & 3 & 12 & 9 & 9 & 12 & 9 & 9 & 6 & 6 & 6 & 9 & 9 & 15 & 3 & 0 & 6 & 3 & 0 & 6 & 15 & 12 & 12 & 6 & 3 & 3 & 6 & 3 & 0 & 0 & 3 & 3 & 0 & 0 & 3 & 9 & 6 & 6 & 9 \\
3 & 6 & 3 & 9 & 12 & 9 & 6 & 6 & 6 & 12 & 9 & 9 & 3 & 0 & 6 & 9 & 9 & 15 & 0 & 3 & 6 & 12 & 15 & 12 & 0 & 3 & 0 & 3 & 3 & 6 & 3 & 6 & 0 & 3 & 0 & 3 & 6 & 9 & 6 & 9 \\
3 & 3 & 6 & 6 & 6 & 6 & 9 & 12 & 9 & 9 & 12 & 9 & 0 & 3 & 6 & 0 & 3 & 6 & 9 & 9 & 15 & 12 & 12 & 15 & 0 & 0 & 3 & 3 & 0 & 0 & 3 & 3 & 3 & 3 & 6 & 6 & 6 & 6 & 9 & 9 \\
0 & 0 & 0 & 9 & 9 & 12 & 9 & 9 & 12 & 9 & 9 & 12 & 0 & 0 & 9 & 0 & 0 & 9 & 0 & 0 & 9 & 21 & 21 & 21 & 0 & 0 & 0 & 6 & 0 & 0 & 0 & 6 & 0 & 0 & 0 & 6 & 9 & 9 & 9 & 15
\end{array}\right]$$}

{\setlength{\arraycolsep}{.08cm} $$R = \tiny \left[\begin{array}{cccccccccccccccccccccccccccccccccccccccc}
30 & 21 & 21 & 21 & 21 & 21 & 21 & 12 & 12 & 12 & 12 & 12 & 12 & 12 & 12 & 12 & 9 & 9 & 9 & 9 & 9 & 9 & 9 & 9 & 9 & 9 & 9 & 9 & 9 & 9 & 9 & 9 & 9 & 9 & 9 & 9 & 9 & 9 & 9 & 9 \\
21 & 18 & 18 & 18 & 12 & 12 & 12 & 9 & 9 & 9 & 9 & 9 & 9 & 9 & 9 & 9 & 3 & 3 & 3 & 3 & 3 & 3 & 3 & 3 & 3 & 3 & 3 & 3 & 9 & 9 & 9 & 9 & 9 & 9 & 9 & 9 & 9 & 9 & 9 & 9 \\
21 & 18 & 18 & 18 & 12 & 12 & 12 & 9 & 9 & 9 & 9 & 9 & 9 & 9 & 9 & 9 & 3 & 3 & 3 & 3 & 3 & 3 & 3 & 3 & 3 & 3 & 3 & 3 & 9 & 9 & 9 & 9 & 9 & 9 & 9 & 9 & 9 & 9 & 9 & 9 \\
21 & 18 & 18 & 18 & 12 & 12 & 12 & 9 & 9 & 9 & 9 & 9 & 9 & 9 & 9 & 9 & 3 & 3 & 3 & 3 & 3 & 3 & 3 & 3 & 3 & 3 & 3 & 3 & 9 & 9 & 9 & 9 & 9 & 9 & 9 & 9 & 9 & 9 & 9 & 9 \\
21 & 12 & 12 & 12 & 18 & 18 & 18 & 9 & 9 & 9 & 9 & 9 & 9 & 9 & 9 & 9 & 9 & 9 & 9 & 9 & 9 & 9 & 9 & 9 & 9 & 9 & 9 & 9 & 3 & 3 & 3 & 3 & 3 & 3 & 3 & 3 & 3 & 3 & 3 & 3 \\
21 & 12 & 12 & 12 & 18 & 18 & 18 & 9 & 9 & 9 & 9 & 9 & 9 & 9 & 9 & 9 & 9 & 9 & 9 & 9 & 9 & 9 & 9 & 9 & 9 & 9 & 9 & 9 & 3 & 3 & 3 & 3 & 3 & 3 & 3 & 3 & 3 & 3 & 3 & 3 \\
21 & 12 & 12 & 12 & 18 & 18 & 18 & 9 & 9 & 9 & 9 & 9 & 9 & 9 & 9 & 9 & 9 & 9 & 9 & 9 & 9 & 9 & 9 & 9 & 9 & 9 & 9 & 9 & 3 & 3 & 3 & 3 & 3 & 3 & 3 & 3 & 3 & 3 & 3 & 3 \\
12 & 9 & 9 & 9 & 9 & 9 & 9 & 6 & 6 & 6 & 6 & 6 & 6 & 6 & 6 & 6 & 3 & 3 & 3 & 3 & 3 & 3 & 3 & 3 & 3 & 3 & 3 & 3 & 3 & 3 & 3 & 3 & 3 & 3 & 3 & 3 & 3 & 3 & 3 & 3 \\
12 & 9 & 9 & 9 & 9 & 9 & 9 & 6 & 6 & 6 & 6 & 6 & 6 & 6 & 6 & 6 & 3 & 3 & 3 & 3 & 3 & 3 & 3 & 3 & 3 & 3 & 3 & 3 & 3 & 3 & 3 & 3 & 3 & 3 & 3 & 3 & 3 & 3 & 3 & 3 \\
12 & 9 & 9 & 9 & 9 & 9 & 9 & 6 & 6 & 6 & 6 & 6 & 6 & 6 & 6 & 6 & 3 & 3 & 3 & 3 & 3 & 3 & 3 & 3 & 3 & 3 & 3 & 3 & 3 & 3 & 3 & 3 & 3 & 3 & 3 & 3 & 3 & 3 & 3 & 3 \\
12 & 9 & 9 & 9 & 9 & 9 & 9 & 6 & 6 & 6 & 6 & 6 & 6 & 6 & 6 & 6 & 3 & 3 & 3 & 3 & 3 & 3 & 3 & 3 & 3 & 3 & 3 & 3 & 3 & 3 & 3 & 3 & 3 & 3 & 3 & 3 & 3 & 3 & 3 & 3 \\
12 & 9 & 9 & 9 & 9 & 9 & 9 & 6 & 6 & 6 & 6 & 6 & 6 & 6 & 6 & 6 & 3 & 3 & 3 & 3 & 3 & 3 & 3 & 3 & 3 & 3 & 3 & 3 & 3 & 3 & 3 & 3 & 3 & 3 & 3 & 3 & 3 & 3 & 3 & 3 \\
12 & 9 & 9 & 9 & 9 & 9 & 9 & 6 & 6 & 6 & 6 & 6 & 6 & 6 & 6 & 6 & 3 & 3 & 3 & 3 & 3 & 3 & 3 & 3 & 3 & 3 & 3 & 3 & 3 & 3 & 3 & 3 & 3 & 3 & 3 & 3 & 3 & 3 & 3 & 3 \\
12 & 9 & 9 & 9 & 9 & 9 & 9 & 6 & 6 & 6 & 6 & 6 & 6 & 6 & 6 & 6 & 3 & 3 & 3 & 3 & 3 & 3 & 3 & 3 & 3 & 3 & 3 & 3 & 3 & 3 & 3 & 3 & 3 & 3 & 3 & 3 & 3 & 3 & 3 & 3 \\
12 & 9 & 9 & 9 & 9 & 9 & 9 & 6 & 6 & 6 & 6 & 6 & 6 & 6 & 6 & 6 & 3 & 3 & 3 & 3 & 3 & 3 & 3 & 3 & 3 & 3 & 3 & 3 & 3 & 3 & 3 & 3 & 3 & 3 & 3 & 3 & 3 & 3 & 3 & 3 \\
12 & 9 & 9 & 9 & 9 & 9 & 9 & 6 & 6 & 6 & 6 & 6 & 6 & 6 & 6 & 6 & 3 & 3 & 3 & 3 & 3 & 3 & 3 & 3 & 3 & 3 & 3 & 3 & 3 & 3 & 3 & 3 & 3 & 3 & 3 & 3 & 3 & 3 & 3 & 3 \\
9 & 3 & 3 & 3 & 9 & 9 & 9 & 3 & 3 & 3 & 3 & 3 & 3 & 3 & 3 & 3 & 6 & 6 & 6 & 6 & 6 & 6 & 6 & 6 & 6 & 6 & 6 & 6 & 0 & 0 & 0 & 0 & 0 & 0 & 0 & 0 & 0 & 0 & 0 & 0 \\
9 & 3 & 3 & 3 & 9 & 9 & 9 & 3 & 3 & 3 & 3 & 3 & 3 & 3 & 3 & 3 & 6 & 6 & 6 & 6 & 6 & 6 & 6 & 6 & 6 & 6 & 6 & 6 & 0 & 0 & 0 & 0 & 0 & 0 & 0 & 0 & 0 & 0 & 0 & 0 \\
9 & 3 & 3 & 3 & 9 & 9 & 9 & 3 & 3 & 3 & 3 & 3 & 3 & 3 & 3 & 3 & 6 & 6 & 6 & 6 & 6 & 6 & 6 & 6 & 6 & 6 & 6 & 6 & 0 & 0 & 0 & 0 & 0 & 0 & 0 & 0 & 0 & 0 & 0 & 0 \\
9 & 3 & 3 & 3 & 9 & 9 & 9 & 3 & 3 & 3 & 3 & 3 & 3 & 3 & 3 & 3 & 6 & 6 & 6 & 6 & 6 & 6 & 6 & 6 & 6 & 6 & 6 & 6 & 0 & 0 & 0 & 0 & 0 & 0 & 0 & 0 & 0 & 0 & 0 & 0 \\
9 & 3 & 3 & 3 & 9 & 9 & 9 & 3 & 3 & 3 & 3 & 3 & 3 & 3 & 3 & 3 & 6 & 6 & 6 & 6 & 6 & 6 & 6 & 6 & 6 & 6 & 6 & 6 & 0 & 0 & 0 & 0 & 0 & 0 & 0 & 0 & 0 & 0 & 0 & 0 \\
9 & 3 & 3 & 3 & 9 & 9 & 9 & 3 & 3 & 3 & 3 & 3 & 3 & 3 & 3 & 3 & 6 & 6 & 6 & 6 & 6 & 6 & 6 & 6 & 6 & 6 & 6 & 6 & 0 & 0 & 0 & 0 & 0 & 0 & 0 & 0 & 0 & 0 & 0 & 0 \\
9 & 3 & 3 & 3 & 9 & 9 & 9 & 3 & 3 & 3 & 3 & 3 & 3 & 3 & 3 & 3 & 6 & 6 & 6 & 6 & 6 & 6 & 6 & 6 & 6 & 6 & 6 & 6 & 0 & 0 & 0 & 0 & 0 & 0 & 0 & 0 & 0 & 0 & 0 & 0 \\
9 & 3 & 3 & 3 & 9 & 9 & 9 & 3 & 3 & 3 & 3 & 3 & 3 & 3 & 3 & 3 & 6 & 6 & 6 & 6 & 6 & 6 & 6 & 6 & 6 & 6 & 6 & 6 & 0 & 0 & 0 & 0 & 0 & 0 & 0 & 0 & 0 & 0 & 0 & 0 \\
9 & 3 & 3 & 3 & 9 & 9 & 9 & 3 & 3 & 3 & 3 & 3 & 3 & 3 & 3 & 3 & 6 & 6 & 6 & 6 & 6 & 6 & 6 & 6 & 6 & 6 & 6 & 6 & 0 & 0 & 0 & 0 & 0 & 0 & 0 & 0 & 0 & 0 & 0 & 0 \\
9 & 3 & 3 & 3 & 9 & 9 & 9 & 3 & 3 & 3 & 3 & 3 & 3 & 3 & 3 & 3 & 6 & 6 & 6 & 6 & 6 & 6 & 6 & 6 & 6 & 6 & 6 & 6 & 0 & 0 & 0 & 0 & 0 & 0 & 0 & 0 & 0 & 0 & 0 & 0 \\
9 & 3 & 3 & 3 & 9 & 9 & 9 & 3 & 3 & 3 & 3 & 3 & 3 & 3 & 3 & 3 & 6 & 6 & 6 & 6 & 6 & 6 & 6 & 6 & 6 & 6 & 6 & 6 & 0 & 0 & 0 & 0 & 0 & 0 & 0 & 0 & 0 & 0 & 0 & 0 \\
9 & 3 & 3 & 3 & 9 & 9 & 9 & 3 & 3 & 3 & 3 & 3 & 3 & 3 & 3 & 3 & 6 & 6 & 6 & 6 & 6 & 6 & 6 & 6 & 6 & 6 & 6 & 6 & 0 & 0 & 0 & 0 & 0 & 0 & 0 & 0 & 0 & 0 & 0 & 0 \\
9 & 9 & 9 & 9 & 3 & 3 & 3 & 3 & 3 & 3 & 3 & 3 & 3 & 3 & 3 & 3 & 0 & 0 & 0 & 0 & 0 & 0 & 0 & 0 & 0 & 0 & 0 & 0 & 6 & 6 & 6 & 6 & 6 & 6 & 6 & 6 & 6 & 6 & 6 & 6 \\
9 & 9 & 9 & 9 & 3 & 3 & 3 & 3 & 3 & 3 & 3 & 3 & 3 & 3 & 3 & 3 & 0 & 0 & 0 & 0 & 0 & 0 & 0 & 0 & 0 & 0 & 0 & 0 & 6 & 6 & 6 & 6 & 6 & 6 & 6 & 6 & 6 & 6 & 6 & 6 \\
9 & 9 & 9 & 9 & 3 & 3 & 3 & 3 & 3 & 3 & 3 & 3 & 3 & 3 & 3 & 3 & 0 & 0 & 0 & 0 & 0 & 0 & 0 & 0 & 0 & 0 & 0 & 0 & 6 & 6 & 6 & 6 & 6 & 6 & 6 & 6 & 6 & 6 & 6 & 6 \\
9 & 9 & 9 & 9 & 3 & 3 & 3 & 3 & 3 & 3 & 3 & 3 & 3 & 3 & 3 & 3 & 0 & 0 & 0 & 0 & 0 & 0 & 0 & 0 & 0 & 0 & 0 & 0 & 6 & 6 & 6 & 6 & 6 & 6 & 6 & 6 & 6 & 6 & 6 & 6 \\
9 & 9 & 9 & 9 & 3 & 3 & 3 & 3 & 3 & 3 & 3 & 3 & 3 & 3 & 3 & 3 & 0 & 0 & 0 & 0 & 0 & 0 & 0 & 0 & 0 & 0 & 0 & 0 & 6 & 6 & 6 & 6 & 6 & 6 & 6 & 6 & 6 & 6 & 6 & 6 \\
9 & 9 & 9 & 9 & 3 & 3 & 3 & 3 & 3 & 3 & 3 & 3 & 3 & 3 & 3 & 3 & 0 & 0 & 0 & 0 & 0 & 0 & 0 & 0 & 0 & 0 & 0 & 0 & 6 & 6 & 6 & 6 & 6 & 6 & 6 & 6 & 6 & 6 & 6 & 6 \\
9 & 9 & 9 & 9 & 3 & 3 & 3 & 3 & 3 & 3 & 3 & 3 & 3 & 3 & 3 & 3 & 0 & 0 & 0 & 0 & 0 & 0 & 0 & 0 & 0 & 0 & 0 & 0 & 6 & 6 & 6 & 6 & 6 & 6 & 6 & 6 & 6 & 6 & 6 & 6 \\
9 & 9 & 9 & 9 & 3 & 3 & 3 & 3 & 3 & 3 & 3 & 3 & 3 & 3 & 3 & 3 & 0 & 0 & 0 & 0 & 0 & 0 & 0 & 0 & 0 & 0 & 0 & 0 & 6 & 6 & 6 & 6 & 6 & 6 & 6 & 6 & 6 & 6 & 6 & 6 \\
9 & 9 & 9 & 9 & 3 & 3 & 3 & 3 & 3 & 3 & 3 & 3 & 3 & 3 & 3 & 3 & 0 & 0 & 0 & 0 & 0 & 0 & 0 & 0 & 0 & 0 & 0 & 0 & 6 & 6 & 6 & 6 & 6 & 6 & 6 & 6 & 6 & 6 & 6 & 6 \\
9 & 9 & 9 & 9 & 3 & 3 & 3 & 3 & 3 & 3 & 3 & 3 & 3 & 3 & 3 & 3 & 0 & 0 & 0 & 0 & 0 & 0 & 0 & 0 & 0 & 0 & 0 & 0 & 6 & 6 & 6 & 6 & 6 & 6 & 6 & 6 & 6 & 6 & 6 & 6 \\
9 & 9 & 9 & 9 & 3 & 3 & 3 & 3 & 3 & 3 & 3 & 3 & 3 & 3 & 3 & 3 & 0 & 0 & 0 & 0 & 0 & 0 & 0 & 0 & 0 & 0 & 0 & 0 & 6 & 6 & 6 & 6 & 6 & 6 & 6 & 6 & 6 & 6 & 6 & 6 \\
9 & 9 & 9 & 9 & 3 & 3 & 3 & 3 & 3 & 3 & 3 & 3 & 3 & 3 & 3 & 3 & 0 & 0 & 0 & 0 & 0 & 0 & 0 & 0 & 0 & 0 & 0 & 0 & 6 & 6 & 6 & 6 & 6 & 6 & 6 & 6 & 6 & 6 & 6 & 6
\end{array}\right]$$}

\section{Proof of relevance to \eqref{equation:sos}}\label{section:relevance}

Starting with the notation and setup of Theorem~\ref{theorem:main}, the coefficient of $t^2$ in $\mathsf{trace}((A+tB)^6)$ is a polynomial $p$ in the variables $a_{\{i,j\}}$ and $b_{\{i,j\}}$. Note that $p$ is a multivariate polynomial where each term has total degree $6$. Moreover, each term as $a$-degree $4$ and $b$-degree $2$. Since $m=6$, if $n \geq 6$, the number of monomials up to type (under $\mathfrak{S}_n$-action $a_{\{i,j\}} \mapsto a_{\{\sigma(i),\sigma(j)\}}$ and $b_{\{i,j\}} \mapsto b_{\{\sigma(i),\sigma(j)\}}$) remains constant as $n \rightarrow \infty$.

For each $n$, the monomial types may be obtained by exhaustion, examining the terms of $p$. For example, when $n=3$, there are exactly $80$ monomial types. When $n=6$, there are exactly $169$ monomial types. Therefore, for all $n \geq 6$, the number of monomial types is $169$. Moreover, for all $n$, every term is equivalent (up to $\mathfrak{S}_n$-action) to one of these $169$ monomials: 
$ a_{11}^{4} b_{11}^{2} $,
$ a_{11}^{2} a_{12}^{2} b_{11}^{2} $,
$ a_{12}^{4} b_{11}^{2} $,
$ a_{12}^{2} a_{13}^{2} b_{11}^{2} $,
$ a_{11} a_{12}^{2} a_{22} b_{11}^{2} $,
$ a_{12}^{2} a_{22}^{2} b_{11}^{2} $,
$ a_{11} a_{12} a_{13} a_{23} b_{11}^{2} $,
$ a_{12} a_{13} a_{22} a_{23} b_{11}^{2} $,
$ a_{12}^{2} a_{23}^{2} b_{11}^{2} $,
$ a_{13} a_{14} a_{23} a_{24} b_{11}^{2} $,
$ a_{11}^{3} a_{12} b_{11} b_{12} $,
$ a_{11} a_{12}^{3} b_{11} b_{12} $,
$ a_{11} a_{12} a_{13}^{2} b_{11} b_{12} $,
$ a_{11}^{2} a_{12} a_{22} b_{11} b_{12} $,
$ a_{12}^{3} a_{22} b_{11} b_{12} $,
$ a_{12} a_{13}^{2} a_{22} b_{11} b_{12} $,
$ a_{11} a_{12} a_{22}^{2} b_{11} b_{12} $,
$ a_{12} a_{22}^{3} b_{11} b_{12} $,
$ a_{11}^{2} a_{13} a_{23} b_{11} b_{12} $,
$ a_{12}^{2} a_{13} a_{23} b_{11} b_{12} $,
$ a_{13}^{3} a_{23} b_{11} b_{12} $,
$ a_{13} a_{14}^{2} a_{23} b_{11} b_{12} $,
$ a_{11} a_{13} a_{22} a_{23} b_{11} b_{12} $,
$ a_{13} a_{22}^{2} a_{23} b_{11} b_{12} $,
$ a_{11} a_{12} a_{23}^{2} b_{11} b_{12} $,
$ a_{12} a_{22} a_{23}^{2} b_{11} b_{12} $,
$ a_{13} a_{23}^{3} b_{11} b_{12} $,
$ a_{14} a_{23}^{2} a_{24} b_{11} b_{12} $,
$ a_{12} a_{13}^{2} a_{33} b_{11} b_{12} $,
$ a_{11} a_{13} a_{23} a_{33} b_{11} b_{12} $,
$ a_{13} a_{22} a_{23} a_{33} b_{11} b_{12} $,
$ a_{12} a_{23}^{2} a_{33} b_{11} b_{12} $,
$ a_{13} a_{23} a_{33}^{2} b_{11} b_{12} $,
$ a_{12} a_{13} a_{14} a_{34} b_{11} b_{12} $,
$ a_{11} a_{14} a_{23} a_{34} b_{11} b_{12} $,
$ a_{14} a_{22} a_{23} a_{34} b_{11} b_{12} $,
$ a_{12} a_{23} a_{24} a_{34} b_{11} b_{12} $,
$ a_{14} a_{23} a_{33} a_{34} b_{11} b_{12} $,
$ a_{13} a_{24} a_{33} a_{34} b_{11} b_{12} $,
$ a_{13} a_{23} a_{34}^{2} b_{11} b_{12} $,
$ a_{15} a_{24} a_{34} a_{35} b_{11} b_{12} $,
$ a_{11}^{4} b_{12}^{2} $,
$ a_{11}^{2} a_{12}^{2} b_{12}^{2} $,
$ a_{12}^{4} b_{12}^{2} $,
$ a_{11}^{2} a_{13}^{2} b_{12}^{2} $,
$ a_{12}^{2} a_{13}^{2} b_{12}^{2} $,
$ a_{13}^{4} b_{12}^{2} $,
$ a_{13}^{2} a_{14}^{2} b_{12}^{2} $,
$ a_{11}^{3} a_{22} b_{12}^{2} $,
$ a_{11} a_{12}^{2} a_{22} b_{12}^{2} $,
$ a_{11} a_{13}^{2} a_{22} b_{12}^{2} $,
$ a_{11}^{2} a_{22}^{2} b_{12}^{2} $,
$ a_{13}^{2} a_{22}^{2} b_{12}^{2} $,
$ a_{11} a_{12} a_{13} a_{23} b_{12}^{2} $,
$ a_{13}^{2} a_{23}^{2} b_{12}^{2} $,
$ a_{14}^{2} a_{23}^{2} b_{12}^{2} $,
$ a_{13} a_{14} a_{23} a_{24} b_{12}^{2} $,
$ a_{11} a_{13}^{2} a_{33} b_{12}^{2} $,
$ a_{13}^{2} a_{22} a_{33} b_{12}^{2} $,
$ a_{12} a_{13} a_{23} a_{33} b_{12}^{2} $,
$ a_{13}^{2} a_{33}^{2} b_{12}^{2} $,
$ a_{11} a_{13} a_{14} a_{34} b_{12}^{2} $,
$ a_{13} a_{14} a_{22} a_{34} b_{12}^{2} $,
$ a_{12} a_{14} a_{23} a_{34} b_{12}^{2} $,
$ a_{13} a_{14} a_{33} a_{34} b_{12}^{2} $,
$ a_{13}^{2} a_{34}^{2} b_{12}^{2} $,
$ a_{14} a_{15} a_{34} a_{35} b_{12}^{2} $,
$ a_{11}^{2} a_{12} a_{13} b_{12} b_{13} $,
$ a_{12}^{3} a_{13} b_{12} b_{13} $,
$ a_{12} a_{13} a_{14}^{2} b_{12} b_{13} $,
$ a_{11} a_{12} a_{13} a_{22} b_{12} b_{13} $,
$ a_{12} a_{13} a_{22}^{2} b_{12} b_{13} $,
$ a_{11}^{3} a_{23} b_{12} b_{13} $,
$ a_{11} a_{12}^{2} a_{23} b_{12} b_{13} $,
$ a_{11} a_{14}^{2} a_{23} b_{12} b_{13} $,
$ a_{11}^{2} a_{22} a_{23} b_{12} b_{13} $,
$ a_{12}^{2} a_{22} a_{23} b_{12} b_{13} $,
$ a_{13}^{2} a_{22} a_{23} b_{12} b_{13} $,
$ a_{14}^{2} a_{22} a_{23} b_{12} b_{13} $,
$ a_{11} a_{22}^{2} a_{23} b_{12} b_{13} $,
$ a_{22}^{3} a_{23} b_{12} b_{13} $,
$ a_{12} a_{13} a_{23}^{2} b_{12} b_{13} $,
$ a_{11} a_{23}^{3} b_{12} b_{13} $,
$ a_{22} a_{23}^{3} b_{12} b_{13} $,
$ a_{11} a_{13} a_{14} a_{24} b_{12} b_{13} $,
$ a_{13} a_{14} a_{22} a_{24} b_{12} b_{13} $,
$ a_{12} a_{14} a_{23} a_{24} b_{12} b_{13} $,
$ a_{12} a_{13} a_{24}^{2} b_{12} b_{13} $,
$ a_{11} a_{23} a_{24}^{2} b_{12} b_{13} $,
$ a_{22} a_{23} a_{24}^{2} b_{12} b_{13} $,
$ a_{12} a_{13} a_{22} a_{33} b_{12} b_{13} $,
$ a_{11} a_{22} a_{23} a_{33} b_{12} b_{13} $,
$ a_{22}^{2} a_{23} a_{33} b_{12} b_{13} $,
$ a_{13} a_{14} a_{24} a_{33} b_{12} b_{13} $,
$ a_{23} a_{24}^{2} a_{33} b_{12} b_{13} $,
$ a_{11}^{2} a_{24} a_{34} b_{12} b_{13} $,
$ a_{12}^{2} a_{24} a_{34} b_{12} b_{13} $,
$ a_{14}^{2} a_{24} a_{34} b_{12} b_{13} $,
$ a_{15}^{2} a_{24} a_{34} b_{12} b_{13} $,
$ a_{11} a_{22} a_{24} a_{34} b_{12} b_{13} $,
$ a_{22}^{2} a_{24} a_{34} b_{12} b_{13} $,
$ a_{23}^{2} a_{24} a_{34} b_{12} b_{13} $,
$ a_{24}^{3} a_{34} b_{12} b_{13} $,
$ a_{14} a_{15} a_{25} a_{34} b_{12} b_{13} $,
$ a_{24} a_{25}^{2} a_{34} b_{12} b_{13} $,
$ a_{22} a_{24} a_{33} a_{34} b_{12} b_{13} $,
$ a_{14}^{2} a_{23} a_{44} b_{12} b_{13} $,
$ a_{13} a_{14} a_{24} a_{44} b_{12} b_{13} $,
$ a_{23} a_{24}^{2} a_{44} b_{12} b_{13} $,
$ a_{11} a_{24} a_{34} a_{44} b_{12} b_{13} $,
$ a_{22} a_{24} a_{34} a_{44} b_{12} b_{13} $,
$ a_{24} a_{34} a_{44}^{2} b_{12} b_{13} $,
$ a_{14} a_{15} a_{23} a_{45} b_{12} b_{13} $,
$ a_{13} a_{15} a_{24} a_{45} b_{12} b_{13} $,
$ a_{23} a_{24} a_{25} a_{45} b_{12} b_{13} $,
$ a_{11} a_{25} a_{34} a_{45} b_{12} b_{13} $,
$ a_{22} a_{25} a_{34} a_{45} b_{12} b_{13} $,
$ a_{25} a_{34} a_{44} a_{45} b_{12} b_{13} $,
$ a_{24} a_{34} a_{45}^{2} b_{12} b_{13} $,
$ a_{26} a_{35} a_{45} a_{46} b_{12} b_{13} $,
$ a_{11}^{2} a_{12}^{2} b_{11} b_{22} $,
$ a_{12}^{4} b_{11} b_{22} $,
$ a_{12}^{2} a_{13}^{2} b_{11} b_{22} $,
$ a_{11} a_{12}^{2} a_{22} b_{11} b_{22} $,
$ a_{11} a_{12} a_{13} a_{23} b_{11} b_{22} $,
$ a_{13}^{2} a_{23}^{2} b_{11} b_{22} $,
$ a_{13} a_{14} a_{23} a_{24} b_{11} b_{22} $,
$ a_{12} a_{13} a_{23} a_{33} b_{11} b_{22} $,
$ a_{12} a_{14} a_{23} a_{34} b_{11} b_{22} $,
$ a_{11} a_{12}^{2} a_{13} b_{13} b_{22} $,
$ a_{12}^{2} a_{13} a_{22} b_{13} b_{22} $,
$ a_{11}^{2} a_{12} a_{23} b_{13} b_{22} $,
$ a_{12}^{3} a_{23} b_{13} b_{22} $,
$ a_{12} a_{13}^{2} a_{23} b_{13} b_{22} $,
$ a_{12} a_{14}^{2} a_{23} b_{13} b_{22} $,
$ a_{11} a_{12} a_{22} a_{23} b_{13} b_{22} $,
$ a_{12} a_{22}^{2} a_{23} b_{13} b_{22} $,
$ a_{11} a_{13} a_{23}^{2} b_{13} b_{22} $,
$ a_{12} a_{13} a_{14} a_{24} b_{13} b_{22} $,
$ a_{11} a_{14} a_{23} a_{24} b_{13} b_{22} $,
$ a_{14} a_{22} a_{23} a_{24} b_{13} b_{22} $,
$ a_{12} a_{23} a_{24}^{2} b_{13} b_{22} $,
$ a_{11} a_{12} a_{23} a_{33} b_{13} b_{22} $,
$ a_{14} a_{23} a_{24} a_{33} b_{13} b_{22} $,
$ a_{12}^{2} a_{14} a_{34} b_{13} b_{22} $,
$ a_{14} a_{24}^{2} a_{34} b_{13} b_{22} $,
$ a_{15} a_{24} a_{25} a_{34} b_{13} b_{22} $,
$ a_{14} a_{23} a_{24} a_{44} b_{13} b_{22} $,
$ a_{15} a_{23} a_{24} a_{45} b_{13} b_{22} $,
$ a_{11} a_{12} a_{13} a_{14} b_{14} b_{23} $,
$ a_{12} a_{13} a_{14} a_{22} b_{14} b_{23} $,
$ a_{12}^{2} a_{14} a_{23} b_{14} b_{23} $,
$ a_{11}^{2} a_{13} a_{24} b_{14} b_{23} $,
$ a_{12}^{2} a_{13} a_{24} b_{14} b_{23} $,
$ a_{13}^{3} a_{24} b_{14} b_{23} $,
$ a_{13} a_{14}^{2} a_{24} b_{14} b_{23} $,
$ a_{13} a_{15}^{2} a_{24} b_{14} b_{23} $,
$ a_{11} a_{13} a_{22} a_{24} b_{14} b_{23} $,
$ a_{13} a_{14} a_{15} a_{25} b_{14} b_{23} $,
$ a_{11} a_{13} a_{24} a_{33} b_{14} b_{23} $,
$ a_{13} a_{22} a_{24} a_{33} b_{14} b_{23} $,
$ a_{12} a_{23} a_{24} a_{33} b_{14} b_{23} $,
$ a_{11} a_{15} a_{25} a_{34} b_{14} b_{23} $,
$ a_{15} a_{25} a_{33} a_{34} b_{14} b_{23} $,
$ a_{12} a_{24} a_{25} a_{35} b_{14} b_{23} $,
$ a_{15} a_{25} a_{35} a_{45} b_{14} b_{23} $,
$ a_{16} a_{26} a_{35} a_{45} b_{14} b_{23} $,
$ a_{15} a_{25} a_{34} a_{55} b_{14} b_{23} $,
$ a_{16} a_{25} a_{34} a_{56} b_{14} b_{23} $.

For each monomial type, every monomial of the given type may be considered. (This can be completed by exhaustion, applying every element of $\mathfrak{S}_n$ to the fixed monomial type.)

For each monomial, we state which entries of $U$ and which entries (in which copy) of $R$ addresses said monomial. Mentioning every entry in full this way is exhaustive, but by examining the mentioned entries by their indexes, we have a finite-length (albeit long) description of showing that each matrix entry is used exactly once. The verification in the case of $n=6$, when replacing constant indices with variables, is a computer-aided verification (in $169$ cases) that for all $n$, the coefficient of $t^2$ in $\mathsf{trace}((A+tB)^6)$ is \eqref{equation:sos}, as desired. Details of this verification (for $n=3,4,5,6$) and accompanying sage code (see~\cite{sage}) are found at
\begin{center}
\url{https://edward-kim-math.github.io/cdta-m6r2}
\end{center}
but for the sake of illustration, in the case when $n=3$, the following three monomials are the only monomials of $p$ appearing together in a type class (up to $\mathfrak{S}_3$-action), namely, $ a_{12}^{2} a_{13}^{2} b_{11}^{2} $ and $ a_{12}^{2} a_{23}^{2} b_{22}^{2} $ and $ a_{13}^{2} a_{23}^{2} b_{33}^{2} $. Each of these three monomials appears in $p$ with coefficient $18$. The list below describes which entries of $U$ and which entries in which copy of $R$ address each monomial:
\begin{itemize}

\item Monomial $ a_{12}^{2} a_{13}^{2} b_{11}^{2} $
\begin{itemize}
\item The $(\hat{1}, \hat{2})$-row $(\hat{1}, \hat{3})$-column entry of $U$ is $6$, addresses $( a_{12}^{2} b_{11} )( a_{13}^{2} b_{11} )$ 
\item The $(\hat{1}, \hat{3})$-row $(\hat{1}, \hat{2})$-column entry of $U$ is $6$, addresses $( a_{13}^{2} b_{11} )( a_{12}^{2} b_{11} )$ 
\item The $ \left(2, 3\right) $-copy of $R$ whose entry  $ 6 $ is addressed by $( a_{12} a_{13} b_{11} )( a_{12} a_{13} b_{11} )$ 
\end{itemize}

\item Monomial $ a_{12}^{2} a_{23}^{2} b_{22}^{2} $
\begin{itemize}
\item The $(\hat{2}, \hat{1})$-row $(\hat{2}, \hat{3})$-column entry of $U$ is $6$, addresses $( a_{12}^{2} b_{22} )( a_{23}^{2} b_{22} )$ 
\item The $(\hat{2}, \hat{3})$-row $(\hat{2}, \hat{1})$-column entry of $U$ is $6$, addresses $( a_{23}^{2} b_{22} )( a_{12}^{2} b_{22} )$ 
\item The $ \left(1, 3\right) $-copy of $R$ whose entry  $ 6 $ is addressed by $( a_{12} a_{23} b_{22} )( a_{12} a_{23} b_{22} )$ 
\end{itemize}

\item Monomial $ a_{13}^{2} a_{23}^{2} b_{33}^{2} $
\begin{itemize}
\item The $(\hat{3}, \hat{1})$-row $(\hat{3}, \hat{2})$-column entry of $U$ is $6$, addresses $( a_{13}^{2} b_{33} )( a_{23}^{2} b_{33} )$ 
\item The $(\hat{3}, \hat{2})$-row $(\hat{3}, \hat{1})$-column entry of $U$ is $6$, addresses $( a_{23}^{2} b_{33} )( a_{13}^{2} b_{33} )$ 
\item The $ \left(1, 2\right) $-copy of $R$ whose entry  $ 6 $ is addressed by $( a_{13} a_{23} b_{33} )( a_{13} a_{23} b_{33} )$ 
\end{itemize}

\end{itemize}

\section{Proof of positive semidefiniteness of $R$}\label{section:psdRn}

Fix an integer $n \geq 4$. Our proof that $R$ is positive semidefinite makes use of the generalized Schur complement, which relies on the Moore-Penrose pseudo-inverse (see~\cite{BenIsraelThomas, Moore, Penrose}). For a matrix $C$ with real entries, its pseudo-inverse $C^+$ satisfies $CC^+C=C$ and $C^+CC^+=C^+$ and $(CC^+)^t=CC^+$ and $(C^+C)^t=C^+C$. Given the matrix
\[ R=\left[\begin{array}{cc}F&G^t \\ G & E\end{array}\right],\]
the generalized Schur complement (see~\cite{Zhang}) of $E$ in $R$ is $H-G^tE^+G$.

\begin{lemma}
Suppose $C$ is an $s \times s$ matrix with every entry $c$. Then the pseudo-inverse of $C$ is an $s \times s$ matrix with every entry $d = \frac{1}{s^2c}$.
\end{lemma}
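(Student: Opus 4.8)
The plan is to verify the four defining Moore--Penrose conditions directly for the candidate pseudo-inverse. Write $C = c\,J$, where $J$ is the $s \times s$ all-ones matrix, and let $D = d\,J$ with $d = \tfrac{1}{s^2 c}$ (here $c \neq 0$, since for $c = 0$ the matrix $C$ is the zero matrix and its pseudo-inverse is again the zero matrix, which is the degenerate case). The key observation is the elementary identity $J^2 = sJ$, since every entry of $J^2$ equals the dot product of two all-ones vectors of length $s$.

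First I would compute $CD = (c\,J)(d\,J) = cd\,J^2 = cd\,s\,J = \tfrac{1}{s}J$, and likewise $DC = \tfrac{1}{s}J$. Both of these are symmetric (indeed equal, and each a scalar multiple of the symmetric matrix $J$), which immediately gives the two symmetry conditions $(CD)^t = CD$ and $(DC)^t = DC$. Next, $CDC = \bigl(\tfrac{1}{s}J\bigr)(c\,J) = \tfrac{c}{s}J^2 = \tfrac{c}{s}\,s\,J = c\,J = C$, which is the first Penrose condition. Finally, $DCD = \bigl(\tfrac{1}{s}J\bigr)(d\,J) = \tfrac{d}{s}J^2 = d\,J = D$, the second condition. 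Since the Moore--Penrose pseudo-inverse is uniquely characterized by these four equations, it follows that $C^+ = D$, i.e.\ every entry of $C^+$ is $d = \tfrac{1}{s^2 c}$.

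There is no real obstacle here: the only thing to be careful about is the scalar bookkeeping and the hypothesis $c \neq 0$ (if $c=0$ were allowed, the formula $\tfrac{1}{s^2 c}$ is undefined, and $C^+ = 0$; presumably the intended reading is $c \neq 0$, and in the applications to $R$ the relevant constant blocks are nonzero). One could also phrase the argument more conceptually: $C = cs\,P$ where $P = \tfrac{1}{s}J$ is the orthogonal projection onto the span of the all-ones vector, so $C^+ = \tfrac{1}{cs}P = \tfrac{1}{cs^2}J$; but the direct four-condition check above is self-contained and needs nothing beyond $J^2 = sJ$.
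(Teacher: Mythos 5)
Your proof is correct and takes essentially the same route as the paper: both verify the four Moore--Penrose conditions directly for the all-constant candidate matrix $D$. Your version is marginally more careful in two respects---you invoke the uniqueness of the pseudo-inverse to conclude $C^+=D$ (which the paper leaves implicit) and you flag the degenerate case $c=0$---but the underlying argument is the same.
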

\begin{proof}
Let $C$ be an $s \times s$ matrix with every entry being $c$. Let $D$ be an $s \times s$ matrix with every entry $d$. Then $(CD)^t=CD$ and $(DC)^t=DC$ are obvious since $C$ and $D$ are themselves symmetric.

We need $CDC=C$. Note $CD$ has every entry $scd$. So $(CD)C$ is a matrix where each entry is $s(scd)c$ and for this to equal $c$, we need $d = \frac{1}{s^2c}$. Similarly, for $DCD=D$ to be true, we again need $d = \frac{1}{s^2c}$.
\end{proof}
The submatrix of $R$ consisting of all rows and columns from block $5$ is the $n(n-1) \times n(n-1)$ with every entry $6$, so its pseudo-inverse is the $n(n-1) \times n(n-1)$ matrix with every entry $\frac{1}{6n^2(n-1)^2}$. Let $E$ be the submatrix of $R$ consisting of all rows and columns from blocks $5$ and $6$. That is, $E$ is the $2n(n-1) \times 2n(n-1)$ matrix
\[
\begin{bNiceArray}{cc}[first-row,first-col]
& n(n-1) & n(n-1) \\
n(n-1) & 6 & 0 \\
n(n-1) & 0 & 6
\end{bNiceArray}
.\]
using the notation introduced in Section~\ref{section:block-constant-matrices} for block constant matrices.

\begin{corollary}\label{corollary:E-pseudoinverse-entries}
The pseudo-inverse $E^+$ of $E$ is
\[
\begin{bNiceArray}{cc}[first-row,first-col]
& n(n-1) & n(n-1) \\
n(n-1) & \frac{1}{6n^2(n-1)^2} & 0 \\
n(n-1) & 0 & \frac{1}{6n^2(n-1)^2}
\end{bNiceArray}
.\]
\end{corollary}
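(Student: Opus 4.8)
The plan is to reduce the corollary to the Lemma just proved, using the fact that $E$ is block diagonal. Observe that $E$ decomposes as $E = \mathrm{diag}(C, C)$, where $C$ is the $n(n-1) \times n(n-1)$ matrix with every entry equal to $6$. So first I would record the elementary observation that pseudo-inversion respects block-diagonal structure: if $M = \mathrm{diag}(M_1, M_2)$, then $M^+ = \mathrm{diag}(M_1^+, M_2^+)$.

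To justify that observation, I would simply verify that $N := \mathrm{diag}(M_1^+, M_2^+)$ satisfies the four Moore-Penrose defining equations. Each of $MNM = M$, $NMN = N$, $(MN)^t = MN$, and $(NM)^t = NM$ is block diagonal and decouples into the corresponding identity on block $1$ (namely $M_1 M_1^+ M_1 = M_1$, etc.) and on block $2$; since $M_1^+$ and $M_2^+$ are pseudo-inverses by hypothesis, all four hold, and by uniqueness of the pseudo-inverse $N = M^+$. This step is routine and I would state it in a line or two.

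Applying this to $E = \mathrm{diag}(C, C)$ gives $E^+ = \mathrm{diag}(C^+, C^+)$. Then I would invoke the Lemma with $s = n(n-1)$ and $c = 6$: it gives that $C^+$ is the $n(n-1) \times n(n-1)$ matrix with every entry $d = \frac{1}{s^2 c} = \frac{1}{6 n^2 (n-1)^2}$. Substituting this into the block-diagonal form yields exactly the displayed matrix for $E^+$, completing the proof.

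There is essentially no obstacle here: the only thing requiring care is the block-diagonal pseudo-inverse fact, and even that is immediate once one notes the four Moore-Penrose equations decouple across the two diagonal blocks. (For completeness one might instead cite this as a standard property of the Moore-Penrose pseudo-inverse, e.g.~from~\cite{BenIsraelThomas}.)
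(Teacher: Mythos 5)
Your proposal is correct and matches the paper's (implicit) argument: the paper states the corollary without proof, having just applied the Lemma with $s=n(n-1)$ and $c=6$ to the constant block, and your explicit verification that the Moore--Penrose equations decouple across the two diagonal blocks is exactly the routine step the paper leaves to the reader. Nothing further is needed.
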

In what follows, we make regular use of the fact that a square matrix with all entries $c \geq 0$ is positive semidefinite since such a matrix is clearly a Gram matrix.

Now we make use of the generalized Schur complement. We decompose the matrix $R$ as
\begin{equation}\label{equation:block-decomposition-for-GSC}
R = \left[\begin{array}{cc}F&G^t \\ G & E\end{array}\right]
\end{equation}
where $E$ has all rows and columns from blocks $5$ and $6$, while $F$ has all rows and columns from blocks $1$, $2$, $3$, and $4$, with $G$ having the appropriate rows and columns. It is known (see~\cite{Zhang}) that a matrix $R$ with the block decomposition as shown in \eqref{equation:block-decomposition-for-GSC} is positive semidefinite if and only if $E$ is positive semidefinite and $(I-EE^T)G = 0$ and $F-G^tE^+G$ is positive semidefinite. 

\begin{lemma}
$E$ is positive semidefinite.
\end{lemma}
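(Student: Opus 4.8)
The plan is to reduce the claim to the single fact, already invoked in the text, that a square matrix whose entries are all a fixed nonnegative constant is positive semidefinite. The key observation is that $E$ is block diagonal: its two diagonal blocks (the rows and columns from blocks $5$ and $6$ of $R$) are each the $n(n-1)\times n(n-1)$ matrix with every entry $6$, while the two off-diagonal blocks are zero, by the definition of $E$ given just before the lemma. So the whole argument splits into two short steps.

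First I would record that the constant matrix $J$ with all entries equal to $6$ is positive semidefinite: writing $\mathbf{1}$ for the all-ones column vector of length $n(n-1)$, we have $J = (\sqrt{6}\,\mathbf{1})(\sqrt{6}\,\mathbf{1})^T$, so that $x^T J x = 6\,(\mathbf{1}^T x)^2 \geq 0$ for every $x$; equivalently, the eigenvalues of $J$ are $6n(n-1)$ (with multiplicity one) and $0$. Second, I would observe that a direct sum of positive semidefinite matrices is positive semidefinite: for $v = (v_1, v_2) \in \mathbb{R}^{2n(n-1)}$ split according to the two blocks, $v^T E v = v_1^T J v_1 + v_2^T J v_2 \geq 0$. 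Combining the two steps yields the lemma.

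There is essentially no obstacle here; the only point requiring any care is the bookkeeping — confirming that $E$ really is exactly this $2n(n-1)\times 2n(n-1)$ block-diagonal matrix — but that was pinned down when $E$ was defined, using the block-constant notation of Section~\ref{section:block-constant-matrices}. This lemma is really just the first half of the hypotheses needed to apply the generalized Schur complement criterion; the substantive work will be checking the condition $(I - EE^{T})G = 0$ and the positive semidefiniteness of the Schur complement $F - G^{t}E^{+}G$.
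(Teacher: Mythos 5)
Your proof is correct and follows essentially the same route as the paper: the paper also observes that $E$ is block diagonal with each diagonal block the all-$6$s matrix, hence a Gram matrix (equivalently, your rank-one factorization $(\sqrt{6}\,\mathbf{1})(\sqrt{6}\,\mathbf{1})^T$), and concludes positive semidefiniteness. Your version merely spells out the direct-sum step $v^T E v = v_1^T J v_1 + v_2^T J v_2 \geq 0$ explicitly.
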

\begin{proof}
Note $E$ is a block diagonal matrix, and $E$ is $6$ as each entry in the two diagonal blocks, so each block is a Gram matrix.
\end{proof}

\begin{lemma}
Let $I$ be the $2n(n-1) \times 2n(n-1)$ identity matrix, and let $E$ and $G$ be as defined before. Then $(I-EE^+)G$ is the $2n(n-1) \times [1+2(n-1) + (n-1)^2]$ matrix of all zeroes.
\end{lemma}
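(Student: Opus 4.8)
The plan is to reduce the claim to two elementary facts: that $EE^{+}$ acts, one diagonal block at a time, as the orthogonal projection onto the constant vectors, and that every column of $G$ is constant on each of the two row-blocks indexed by blocks $5$ and $6$. Granting these, $(I-EE^{+})G$ vanishes column by column.

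First I would record the explicit form of $EE^{+}$. Since $E$ is block diagonal with two identical diagonal blocks, each the $n(n-1)\times n(n-1)$ matrix with every entry $6$, and since by Corollary~\ref{corollary:E-pseudoinverse-entries} the matrix $E^{+}$ is block diagonal with two identical blocks each having every entry $\tfrac{1}{6n^{2}(n-1)^{2}}$, the product of an all-$6$ block with an all-$\tfrac{1}{6n^{2}(n-1)^{2}}$ block (both of size $s=n(n-1)$) is the block with every entry $s\cdot 6\cdot\tfrac{1}{6n^{2}(n-1)^{2}}=\tfrac{1}{n(n-1)}$. Hence $EE^{+}$ is block diagonal with two identical blocks, each equal to $\tfrac{1}{n(n-1)}J$, where $J$ is the $n(n-1)\times n(n-1)$ all-ones matrix. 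Consequently $I-EE^{+}$ is block diagonal with two identical blocks, each equal to $I_{n(n-1)}-\tfrac{1}{n(n-1)}J$; applied to a vector $v\in\mathbb{R}^{n(n-1)}$ this returns $v$ minus the average of its coordinates times the all-ones vector, and therefore annihilates precisely the constant vectors.

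Next I would invoke the block-constant description of $R$. In the decomposition \eqref{equation:block-decomposition-for-GSC}, the matrix $G$ has rows indexed by blocks $5$ and $6$ and columns indexed by blocks $1$, $2$, $3$, and $4$. Reading off the relevant entries of the block-constant matrix $R$, the block rows of $G$ carry the constant values $9,3,9,3$ (from block $5$) and $9,9,3,3$ (from block $6$) across the four column blocks; in particular each column of $G$ is constant when restricted to the $n(n-1)$ rows of block $5$, and constant when restricted to the $n(n-1)$ rows of block $6$. Applying $I-EE^{+}$ one diagonal block at a time then kills every column of $G$, so $(I-EE^{+})G$ is the $2n(n-1)\times[1+2(n-1)+(n-1)^{2}]$ zero matrix, as claimed.

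The argument is a direct computation, and I do not expect a genuine obstacle. The only point demanding care is the bookkeeping: confirming exactly which blocks of the block-constant matrix $R$ assemble into $G$, and checking that those blocks are genuinely constant, so that $I-EE^{+}$, which only detects within-block averages, annihilates each column.
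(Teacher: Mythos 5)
Your proposal is correct and follows essentially the same route as the paper: compute $EE^{+}$ as the block-diagonal matrix with two copies of $\tfrac{1}{n(n-1)}J$, observe that each column of $G$ is constant on each of the two row-blocks, and conclude that $I-EE^{+}$ annihilates it. The paper carries out the final step as an explicit dot product of a row of $I-EE^{+}$ with a column of $G$, whereas you phrase it as annihilation of constant vectors by the centering projection, but the content is identical.
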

\begin{proof}
It follows from Corollary~\ref{corollary:E-pseudoinverse-entries} that $E E^+$ is a $2n(n-1) \times 2n(n-1)$ matrix whose upper left block and lower right blocks are $n(n-1) \times n(n-1)$ matrices with every entry $n(n-1) \cdot 6 \cdot \frac{1}{6n^2(n-1)^2} = \frac{1}{n(n-1)}$, and whose upper right block and lower left block are all zeroes. Thus, the $(i,j)$-entry of $H = I - E E^{+}$ is
\[
H_{i,j} = \left\{
\begin{array}{ll}
1 - \frac{1}{n(n-1)} & \text{if } i = j \\
- \frac{1}{n(n-1)} & \text{if } i \not= j \text{ and } (i,j \leq n \text{ or } i,j > n) \\
0 & \text{otherwise.}
\end{array}
\right.
\]
Every column of $G$ is a vector with $2n(n-1)$ entries, where the first $n(n-1)$ entries are identical (either $3$ or $9$), and the second $n(n-1)$ entries are identical (either $3$ or $9$). Every one of these combinations occurs: that is, there is a column of all $9$s, but there is also a column of $G$ where the first half is all $3$s and the second half is all $9$s, and so on.

Now we prove $HG=0$. For this, consider an arbitrary row of $H$ and an arbitrary column of $G$, and we will consider the result of the dot product. For the chosen column of $G$, let us say that the first $n(n-1)$ entries are $\rho$ and the second $n(n-1)$ entries are $\tau$. For the selected row of $H$, either the $0$s are the second-half entries or the first-half entries. If the second-half entries are $0$, then the product is
\[ \textstyle (1 - \frac{1}{n(n-1)}) \rho + (n(n-1)-1) (\frac{1}{n(n-1)}) \rho + n(n-1) \cdot 0 \cdot \tau = n(n-1) \cdot \frac1{n(n-1)} \cdot \rho - \rho = 0. \]
If the chosen row of $H$ is in the second half (so the $0$ entries are in the first half, then the expression is similar to the one just shown, except that the $\rho$s and the $\tau$s are switched. In particular, the result is still $0$, so every entry of $HG$ is $0$.
\end{proof}

\begin{lemma}
The generalized Schur complement $F-G^tE^+G$ is positive semidefinite.
\end{lemma}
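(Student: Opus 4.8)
The plan is to compute the generalized Schur complement $F-G^{t}E^{+}G$ in closed form and recognize it as a constant nonnegative matrix, which is automatically positive semidefinite.

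First I would transcribe the six relevant blocks of $R$ from its displayed block-constant form. Using the block sizes $1,\,n-1,\,n-1,\,(n-1)^{2},\,n(n-1),\,n(n-1)$ and writing $N:=n(n-1)$, the matrix $F$ (rows and columns from blocks $1$--$4$) is the $n^{2}\times n^{2}$ block-constant matrix whose block-value matrix is
\[
\begin{pmatrix}
30 & 21 & 21 & 12\\
21 & 18 & 12 & 9\\
21 & 12 & 18 & 9\\
12 & 9 & 9 & 6
\end{pmatrix},
\]
the matrix $E$ is the $2N\times 2N$ block-diagonal matrix whose two diagonal blocks are each the $N\times N$ matrix with every entry $6$, and $G$ (rows from blocks $5$--$6$, columns from blocks $1$--$4$) is the $2N\times n^{2}$ matrix whose first $N$ rows all equal a single block-constant row vector $u_{1}^{t}$ with block values $(9,3,9,3)$, and whose second $N$ rows all equal the block-constant row vector $u_{2}^{t}$ with block values $(9,9,3,3)$. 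Thus $G$ is the stacked matrix with top half $\mathbf{1}_{N}u_{1}^{t}$ and bottom half $\mathbf{1}_{N}u_{2}^{t}$, where $\mathbf{1}_{N}$ is the all-ones column vector of length $N$ and $u_{1},u_{2}\in\mathbb{R}^{n^{2}}$.

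Next I would substitute Corollary~\ref{corollary:E-pseudoinverse-entries}: $E^{+}$ is block-diagonal, each diagonal block being $\tfrac{1}{6N^{2}}$ times the $N\times N$ all-ones matrix $J$. Since each half of $G$ is a single repeated row, the product telescopes; using $\mathbf{1}_{N}^{t}J\mathbf{1}_{N}=N^{2}$ we get
\[
G^{t}E^{+}G=\tfrac{1}{6N^{2}}\bigl((u_{1}\mathbf{1}_{N}^{t})\,J\,(\mathbf{1}_{N}u_{1}^{t})+(u_{2}\mathbf{1}_{N}^{t})\,J\,(\mathbf{1}_{N}u_{2}^{t})\bigr)=\tfrac{1}{6}\bigl(u_{1}u_{1}^{t}+u_{2}u_{2}^{t}\bigr),
\]
which is again block-constant with block-value matrix $\tfrac{1}{6}(\gamma_{1}\gamma_{1}^{t}+\gamma_{2}\gamma_{2}^{t})$ for $\gamma_{1}=(9,3,9,3)^{t}$ and $\gamma_{2}=(9,9,3,3)^{t}$. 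Performing this $4\times4$ arithmetic and subtracting from the block-value matrix of $F$ above, I expect each of the sixteen block values of $F-G^{t}E^{+}G$ to come out to $3$; that is, $F-G^{t}E^{+}G$ is the $n^{2}\times n^{2}$ matrix with every entry $3$, equivalently $3\,\mathbf{1}_{n^{2}}\mathbf{1}_{n^{2}}^{t}$, a Gram matrix, hence positive semidefinite.

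I do not anticipate a real obstacle here: the argument is block bookkeeping plus a small explicit computation. The one point that makes everything go smoothly is the observation that each half of $G$ consists of a single repeated row, which is precisely what lets the rank-$2$ matrix $E^{+}$ interact cleanly with $G$ and collapses the Schur-complement correction to the transparent rank-two form $\tfrac16(u_{1}u_{1}^{t}+u_{2}u_{2}^{t})$; after that, the happy coincidence that all sixteen block values of the difference equal $3$ delivers the conclusion at once.
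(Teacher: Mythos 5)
Your computation is correct and follows essentially the same route as the paper: both reduce everything to block-constant arithmetic, find that $F-G^{t}E^{+}G$ is the $n^{2}\times n^{2}$ all-$3$ matrix, and conclude it is positive semidefinite as a Gram matrix. Your observation that each half of $G$ is a repeated row, collapsing the correction term to $\tfrac16(u_1u_1^{t}+u_2u_2^{t})$, is a slightly cleaner way to organize the same calculation the paper performs entrywise.
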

\begin{proof}
The indexing for $E^+$ follows the indexing for $E$. Thus, $E^+G$ has rows from blocks $5$ and $6$, and columns from blocks $1,2,3,4$. For each entry from row block $5$ and column block $1$, the entry of $E^+G$ is $n(n-1) \cdot \frac1{6n^2(n-1)^2} \cdot 9 + n(n-1) \cdot 0 \cdot 9 = \frac{9}{6n(n-1)}$. Each entry of $E^+G$ is similarly computed. Thus, $E^+G$ is the product of the following two block constant matrices
\[
\begin{bNiceArray}{cc}[first-row,first-col]
& n(n-1) & n(n-1) \\
n(n-1) & \frac{1}{6n^2(n-1)^2} & 0 \\
n(n-1) & 0 & \frac{1}{6n^2(n-1)^2}
\end{bNiceArray}
\,\,\,\,\,\,\,
\begin{bNiceArray}{cccc}[first-row,first-col]
& 1 & n-1 & n-1 & (n-1)^2 \\
n(n-1) & 9 & 3 & 9 & 3\\
n(n-1) & 9 & 9 & 3 & 3
\end{bNiceArray}
\]
which simplifies to
\[
\begin{bNiceArray}{cccc}[first-row,first-col]
& 1 & n-1 & n-1 & (n-1)^2 \\
n(n-1) & \frac{9}{6n(n-1)} & \frac{3}{6n(n-1)} & \frac{9}{6n(n-1)} & \frac{3}{6n(n-1)}\\
n(n-1) & \frac{9}{6n(n-1)} & \frac{9}{6n(n-1)} & \frac{3}{6n(n-1)} & \frac{3}{6n(n-1)}
\end{bNiceArray}
.\]

We now examine $G^t(E^+G)$. For example, the (only) entry whose row block is $1$ and whose column block is $1$ has value $n(n-1) \cdot 9 \cdot \frac{9}{6n(n-1)} + n(n-1) \cdot 9 \cdot \frac{9}{6n(n-1)} = \frac{9 \cdot 9 + 9 \cdot 9}{6}= 27$. The remaining entries are found the same way. Thus, $G^t(E^+G)$ is the product of the following two block constant matrices
\[
\begin{bNiceArray}{cc}[first-row,first-col]
& n(n-1) & n(n-1) \\
1 & 9 & 9 \\
n-1 & 3 & 9 \\
n-1 & 9 & 3 \\
(n-1)^2 & 3 & 3
\end{bNiceArray}
\,\,\,\,\,\,\,
\begin{bNiceArray}{cccc}[first-row,first-col]
& 1 & n-1 & n-1 & (n-1)^2 \\
n(n-1) & \frac{9}{6n(n-1)} & \frac{3}{6n(n-1)} & \frac{9}{6n(n-1)} & \frac{3}{6n(n-1)}\\
n(n-1) & \frac{9}{6n(n-1)} & \frac{9}{6n(n-1)} & \frac{3}{6n(n-1)} & \frac{3}{6n(n-1)}
\end{bNiceArray}
\]
which simplifies to
\[
\begin{bNiceArray}{cccc}[first-row,first-col]
& 1 & n-1 & n-1 & (n-1)^2 \\
1       & \frac{9 \cdot 9 + 9 \cdot 9}{6} & \frac{9 \cdot 3 + 9 \cdot 9}{6} & \frac{9 \cdot 9 + 9 \cdot 3}{6} & \frac{9 \cdot 3 + 9 \cdot 3}{6} \\
n-1     & \frac{3 \cdot 9 + 9 \cdot 9}{6} & \frac{3 \cdot 3 + 9 \cdot 9}{6} & \frac{3 \cdot 9 + 9 \cdot 3}{6} & \frac{3 \cdot 3 + 9 \cdot 3}{6} \\
n-1     & \frac{9 \cdot 9 + 3 \cdot 9}{6} & \frac{9 \cdot 3 + 3 \cdot 9}{6} & \frac{9 \cdot 9 + 3 \cdot 3}{6} & \frac{9 \cdot 3 + 3 \cdot 3}{6} \\
(n-1)^2 & \frac{3 \cdot 9 + 3 \cdot 9}{6} & \frac{3 \cdot 3 + 3 \cdot 9}{6} & \frac{3 \cdot 9 + 3 \cdot 3}{6} & \frac{3 \cdot 3 + 3 \cdot 3}{6}
\end{bNiceArray}
=
\begin{bNiceArray}{cccc}[first-row,first-col]
& 1 & n-1 & n-1 & (n-1)^2 \\
1       & 27 & 18 & 18 & 9 \\
n-1     & 18 & 15 & 9 & 6 \\
n-1     & 18 & 9 & 15 & 6 \\
(n-1)^2 & 9 & 6 & 6 & 3
\end{bNiceArray}.
\]
Then $F- G^tE^+G$ is
\[
\begin{bNiceArray}{cccc}[first-row,first-col]
& 1 & n-1 & n-1 & (n-1)^2 \\
1 & 30& 21& 21& 12\\
n-1 & 21& 18& 12& 9\\
n-1 & 21& 12& 18& 9\\
(n-1)^2 & 12& 9& 9& 6\\
\end{bNiceArray}
\,\,\,\,\,\,\,
-
\,\,\,\,\,\,\,
\begin{bNiceArray}{cccc}[first-row,first-col]
& 1 & n-1 & n-1 & (n-1)^2 \\
1       & 27 & 18 & 18 & 9 \\
n-1     & 18 & 15 & 9 & 6 \\
n-1     & 18 & 9 & 15 & 6 \\
(n-1)^2 & 9 & 6 & 6 & 3
\end{bNiceArray}
\,\,\,\,\,\,\,
=
\,\,\,\,\,\,\,
\begin{bNiceArray}{c}[first-row,first-col]
& n^2 \\
n^2 & 3
\end{bNiceArray}
.
\]
Therefore $F-G^tE^+G$ is positive semidefinite since it is an $n^2 \times n^2$ matrix with every entry $3$.
\end{proof}

\section{Proof of positive semidefiniteness of $U$}\label{section:psdU} 


Fix an integer $n \geq 4$. In the following subsections, we describe the entries of $U$, $U^2$, $U^3$, and $U^4$ based on the indexing of rows and columns. In each of these four matrices, due to symmetry of an $\mathfrak{S}_n$ action, we only need to present one entry up to index type. Moreover, the size of this information does not grow as $n$ increases, and this is all that is needed to show that $U$ satisfies a certain polynomial whose degree is bounded universally in $n$.

\subsection{Entries of $U$}

We describe the matrix $U$ defined in Section~\ref{section:defU} by index type. While this expands on information which is essentially already provided by Section~\ref{section:defU}, formatting the information in this expanded form (and in this order, matching the next several subsections) will be useful in proving that $U$ satisfies a certain polynomial. In the list below, the indexing applies only when it makes sense. For example, discussing the $(\{4, 5, 6\}, 4)$-column does not apply when $n=4$.
\begin{itemize}
\item The $(\{1, 2, 3\}, 1)$-row $(\{1, 2, 3\}, 1)$-column entry is $24$.
\item The $(\{1, 2, 3\}, 1)$-row $(\{1, 2, 3\}, 2)$-column entry is $18$.
\item The $(\{1, 2, 3\}, 1)$-row $(\{1, 2, 4\}, 1)$-column entry is $18$.
\item The $(\{1, 2, 3\}, 1)$-row $(\{1, 2, 4\}, 2)$-column entry is $12$.
\item The $(\{1, 2, 3\}, 1)$-row $(\{1, 2, 4\}, 4)$-column entry is $12$.
\item The $(\{1, 2, 3\}, 3)$-row $(\{1, 2, 4\}, 4)$-column entry is $12$.
\item The $(\{1, 2, 3\}, 1)$-row $(\{1, 4, 5\}, 1)$-column entry is $12$.
\item The $(\{1, 2, 3\}, 1)$-row $(\{1, 4, 5\}, 4)$-column entry is $6$.
\item The $(\{1, 2, 3\}, 2)$-row $(\{1, 4, 5\}, 4)$-column entry is $6$.
\item The $(\{1, 2, 3\}, 1)$-row $(\{4, 5, 6\}, 4)$-column entry is $0$.
\item The $(1, 2)$-row $(1, 2)$-column entry is $36$.
\item The $(1, 2)$-row $(1, 3)$-column entry is $30$.
\item The $(1, 2)$-row $(2, 1)$-column entry is $24$.
\item The $(1, 2)$-row $(2, 3)$-column entry is $12$.
\item The $(1, 2)$-row $(3, 1)$-column entry is $12$.
\item The $(1, 2)$-row $(3, 2)$-column entry is $6$.
\item The $(1, 2)$-row $(3, 4)$-column entry is $0$.
\item The $(\hat{1}, \hat{2})$-row $(\hat{1}, \hat{1})$-column entry is $9$.
\item The $(\hat{1}, \hat{1})$-row $(\hat{1}, \hat{2})$-column entry is $9$.
\item The $(\hat{1}, \hat{1})$-row $(\hat{2}, \hat{2})$-column entry is $0$.
\item The $(\hat{1}, \hat{1})$-row $(\hat{2}, \hat{1})$-column entry is $6$.
\item The $(\hat{1}, \hat{1})$-row $(\hat{2}, \hat{3})$-column entry is $0$.
\item The $(\hat{1}, \hat{2})$-row $(\hat{1}, \hat{2})$-column entry is $9$.
\item The $(\hat{1}, \hat{2})$-row $(\hat{2}, \hat{1})$-column entry is $6$.
\item The $(\hat{1}, \hat{2})$-row $(\hat{1}, \hat{3})$-column entry is $6$.
\item The $(\hat{1}, \hat{2})$-row $(\hat{3}, \hat{1})$-column entry is $3$.
\item The $(\hat{1}, \hat{2})$-row $(\hat{2}, \hat{3})$-column entry is $3$.
\item The $(\hat{1}, \hat{2})$-row $(\hat{3}, \hat{2})$-column entry is $3$.
\item The $(\hat{1}, \hat{2})$-row $(\hat{3}, \hat{4})$-column entry is $0$.
\item The $(\{1, 2, 3\}, 1)$-row $(1, 2)$-column entry is $24$.
\item The $(\{1, 2, 3\}, 2)$-row $(1, 2)$-column entry is $18$.
\item The $(\{1, 2, 3\}, 3)$-row $(1, 2)$-column entry is $18$.
\item The $(\{1, 2, 3\}, 1)$-row $(1, 4)$-column entry is $18$.
\item The $(\{1, 2, 3\}, 2)$-row $(1, 4)$-column entry is $12$.
\item The $(\{1, 2, 3\}, 1)$-row $(4, 1)$-column entry is $6$.
\item The $(\{1, 2, 3\}, 2)$-row $(4, 1)$-column entry is $6$.
\item The $(\{1, 2, 3\}, 1)$-row $(4, 5)$-column entry is $0$.
\item The $(\{1, 2, 3\}, 1)$-row $(\hat{1}, \hat{1})$-column entry is $12$.
\item The $(\{1, 2, 3\}, 2)$-row $(\hat{1}, \hat{1})$-column entry is $9$.
\item The $(\{1, 2, 3\}, 1)$-row $(\hat{1}, \hat{2})$-column entry is $9$.
\item The $(\{1, 2, 3\}, 2)$-row $(\hat{1}, \hat{2})$-column entry is $12$.
\item The $(\{1, 2, 3\}, 3)$-row $(\hat{1}, \hat{2})$-column entry is $9$.
\item The $(\{1, 2, 3\}, 1)$-row $(\hat{1}, \hat{4})$-column entry is $6$.
\item The $(\{1, 2, 3\}, 2)$-row $(\hat{1}, \hat{4})$-column entry is $6$.
\item The $(\{1, 2, 3\}, 1)$-row $(\hat{4}, \hat{1})$-column entry is $6$.
\item The $(\{1, 2, 3\}, 2)$-row $(\hat{4}, \hat{1})$-column entry is $3$.
\item The $(\{1, 2, 3\}, 1)$-row $(\hat{4}, \hat{4})$-column entry is $0$.
\item The $(\{1, 2, 3\}, 1)$-row $(\hat{4}, \hat{5})$-column entry is $0$.
\item The $(1, 2)$-row $(\hat{1}, \hat{1})$-column entry is $21$.
\item The $(1, 2)$-row $(\hat{1}, \hat{2})$-column entry is $15$.
\item The $(1, 2)$-row $(\hat{1}, \hat{3})$-column entry is $12$.
\item The $(1, 2)$-row $(\hat{2}, \hat{1})$-column entry is $15$.
\item The $(1, 2)$-row $(\hat{2}, \hat{2})$-column entry is $9$.
\item The $(1, 2)$-row $(\hat{2}, \hat{3})$-column entry is $6$.
\item The $(1, 2)$-row $(\hat{3}, \hat{1})$-column entry is $9$.
\item The $(1, 2)$-row $(\hat{3}, \hat{2})$-column entry is $3$.
\item The $(1, 2)$-row $(\hat{3}, \hat{4})$-column entry is $0$.
\end{itemize}

\subsection{Entries of $U^2$}

This section describes the entries of $U^2$ up to index type. Detailed proofs for several entries are found at 
\begin{center}
\url{https://edward-kim-math.github.io/cdta-m6r2}
\end{center}
The entries of $U^2$ up to index type are:
\begin{itemize}
\item The $(\{1, 2, 3\}, 1)$-row $(\{1, 2, 3\}, 1)$-column entry is $216n^{2} + 1026n - 432$.
\item The $(\{1, 2, 3\}, 1)$-row $(\{1, 2, 3\}, 2)$-column entry is $198n^{2} + 1035n - 432$.
\item The $(\{1, 2, 3\}, 1)$-row $(\{1, 2, 4\}, 1)$-column entry is $162n^{2} + 1071n - 432$.
\item The $(\{1, 2, 3\}, 1)$-row $(\{1, 2, 4\}, 2)$-column entry is $144n^{2} + 1080n - 432$.
\item The $(\{1, 2, 3\}, 1)$-row $(\{1, 2, 4\}, 4)$-column entry is $126n^{2} + 1089n - 432$.
\item The $(\{1, 2, 3\}, 3)$-row $(\{1, 2, 4\}, 4)$-column entry is $108n^{2} + 1098n - 432$.
\item The $(\{1, 2, 3\}, 1)$-row $(\{1, 4, 5\}, 1)$-column entry is $108n^{2} + 1116n - 432$.
\item The $(\{1, 2, 3\}, 1)$-row $(\{1, 4, 5\}, 4)$-column entry is $72n^{2} + 1134n - 432$.
\item The $(\{1, 2, 3\}, 2)$-row $(\{1, 4, 5\}, 4)$-column entry is $54n^{2} + 1143n - 432$.
\item The $(\{1, 2, 3\}, 1)$-row $(\{4, 5, 6\}, 4)$-column entry is $1188n - 432$.
\item The $(1, 2)$-row $(1, 2)$-column entry is $360n^{2} + 918n - 432$.
\item The $(1, 2)$-row $(1, 3)$-column entry is $306n^{2} + 963n - 432$.
\item The $(1, 2)$-row $(2, 1)$-column entry is $252n^{2} + 990n - 432$.
\item The $(1, 2)$-row $(2, 3)$-column entry is $126n^{2} + 1089n - 432$.
\item The $(1, 2)$-row $(3, 1)$-column entry is $126n^{2} + 1089n - 432$.
\item The $(1, 2)$-row $(3, 2)$-column entry is $54n^{2} + 1143n - 432$.
\item The $(1, 2)$-row $(3, 4)$-column entry is $1188n - 432$.
\item The $(\hat{1}, \hat{2})$-row $(\hat{1}, \hat{1})$-column entry is $90n^{2} + 225n - 108$.
\item The $(\hat{1}, \hat{1})$-row $(\hat{1}, \hat{2})$-column entry is $90n^{2} + 225n - 108$.
\item The $(\hat{1}, \hat{1})$-row $(\hat{2}, \hat{2})$-column entry is $297n - 108$.
\item The $(\hat{1}, \hat{1})$-row $(\hat{2}, \hat{1})$-column entry is $63n^{2} + 252n - 108$.
\item The $(\hat{1}, \hat{1})$-row $(\hat{2}, \hat{3})$-column entry is $297n - 108$.
\item The $(\hat{1}, \hat{2})$-row $(\hat{1}, \hat{2})$-column entry is $81n^{2} + 234n - 108$.
\item The $(\hat{1}, \hat{2})$-row $(\hat{2}, \hat{1})$-column entry is $72n^{2} + 243n - 108$.
\item The $(\hat{1}, \hat{2})$-row $(\hat{1}, \hat{3})$-column entry is $54n^{2} + 252n - 108$.
\item The $(\hat{1}, \hat{2})$-row $(\hat{3}, \hat{1})$-column entry is $36n^{2} + 270n - 108$.
\item The $(\hat{1}, \hat{2})$-row $(\hat{2}, \hat{3})$-column entry is $36n^{2} + 270n - 108$.
\item The $(\hat{1}, \hat{2})$-row $(\hat{3}, \hat{2})$-column entry is $27n^{2} + 279n - 108$.
\item The $(\hat{1}, \hat{2})$-row $(\hat{3}, \hat{4})$-column entry is $297n - 108$.
\item The $(\{1, 2, 3\}, 1)$-row $(1, 2)$-column entry is $234n^{2} + 1017n - 432$.
\item The $(\{1, 2, 3\}, 2)$-row $(1, 2)$-column entry is $198n^{2} + 1035n - 432$.
\item The $(\{1, 2, 3\}, 3)$-row $(1, 2)$-column entry is $180n^{2} + 1044n - 432$.
\item The $(\{1, 2, 3\}, 1)$-row $(1, 4)$-column entry is $180n^{2} + 1062n - 432$.
\item The $(\{1, 2, 3\}, 2)$-row $(1, 4)$-column entry is $126n^{2} + 1089n - 432$.
\item The $(\{1, 2, 3\}, 1)$-row $(4, 1)$-column entry is $72n^{2} + 1134n - 432$.
\item The $(\{1, 2, 3\}, 2)$-row $(4, 1)$-column entry is $54n^{2} + 1143n - 432$.
\item The $(\{1, 2, 3\}, 1)$-row $(4, 5)$-column entry is $1188n - 432$.
\item The $(\{1, 2, 3\}, 1)$-row $(\hat{1}, \hat{1})$-column entry is $126n^{2} + 504n - 216$.
\item The $(\{1, 2, 3\}, 2)$-row $(\hat{1}, \hat{1})$-column entry is $90n^{2} + 522n - 216$.
\item The $(\{1, 2, 3\}, 1)$-row $(\hat{1}, \hat{2})$-column entry is $108n^{2} + 513n - 216$.
\item The $(\{1, 2, 3\}, 2)$-row $(\hat{1}, \hat{2})$-column entry is $108n^{2} + 513n - 216$.
\item The $(\{1, 2, 3\}, 3)$-row $(\hat{1}, \hat{2})$-column entry is $90n^{2} + 522n - 216$.
\item The $(\{1, 2, 3\}, 1)$-row $(\hat{1}, \hat{4})$-column entry is $72n^{2} + 540n - 216$.
\item The $(\{1, 2, 3\}, 2)$-row $(\hat{1}, \hat{4})$-column entry is $54n^{2} + 549n - 216$.
\item The $(\{1, 2, 3\}, 1)$-row $(\hat{4}, \hat{1})$-column entry is $54n^{2} + 558n - 216$.
\item The $(\{1, 2, 3\}, 2)$-row $(\hat{4}, \hat{1})$-column entry is $36n^{2} + 567n - 216$.
\item The $(\{1, 2, 3\}, 1)$-row $(\hat{4}, \hat{4})$-column entry is $594n - 216$.
\item The $(\{1, 2, 3\}, 1)$-row $(\hat{4}, \hat{5})$-column entry is $594n - 216$.
\item The $(1, 2)$-row $(\hat{1}, \hat{1})$-column entry is $216n^{2} + 432n - 216$.
\item The $(1, 2)$-row $(\hat{1}, \hat{2})$-column entry is $162n^{2} + 468n - 216$.
\item The $(1, 2)$-row $(\hat{1}, \hat{3})$-column entry is $126n^{2} + 495n - 216$.
\item The $(1, 2)$-row $(\hat{2}, \hat{1})$-column entry is $144n^{2} + 486n - 216$.
\item The $(1, 2)$-row $(\hat{2}, \hat{2})$-column entry is $90n^{2} + 522n - 216$.
\item The $(1, 2)$-row $(\hat{2}, \hat{3})$-column entry is $54n^{2} + 549n - 216$.
\item The $(1, 2)$-row $(\hat{3}, \hat{1})$-column entry is $90n^{2} + 531n - 216$.
\item The $(1, 2)$-row $(\hat{3}, \hat{2})$-column entry is $36n^{2} + 567n - 216$.
\item The $(1, 2)$-row $(\hat{3}, \hat{4})$-column entry is $594n - 216$.
\end{itemize}

\subsection{Entries of $U^3$}

The entries of $U^3$ can be obtained using identical methods used for the entries of $U^2$. The entries of $U^3$ up to index type are:
    \begin{itemize}
\item The $(\{1, 2, 3\}, 1)$-row $(\{1, 2, 3\}, 1)$-column entry is $2160n^{4} + 44496n^{3} - 43254n^{2} + 9828n$.
\item The $(\{1, 2, 3\}, 1)$-row $(\{1, 2, 3\}, 2)$-column entry is $2052n^{4} + 44604n^{3} - 43281n^{2} + 9828n$.
\item The $(\{1, 2, 3\}, 1)$-row $(\{1, 2, 4\}, 1)$-column entry is $1620n^{4} + 45360n^{3} - 43605n^{2} + 9828n$.
\item The $(\{1, 2, 3\}, 1)$-row $(\{1, 2, 4\}, 2)$-column entry is $1512n^{4} + 45468n^{3} - 43632n^{2} + 9828n$.
\item The $(\{1, 2, 3\}, 1)$-row $(\{1, 2, 4\}, 4)$-column entry is $1296n^{4} + 45738n^{3} - 43713n^{2} + 9828n$.
\item The $(\{1, 2, 3\}, 3)$-row $(\{1, 2, 4\}, 4)$-column entry is $1080n^{4} + 46008n^{3} - 43794n^{2} + 9828n$.
\item The $(\{1, 2, 3\}, 1)$-row $(\{1, 4, 5\}, 1)$-column entry is $1080n^{4} + 46224n^{3} - 43956n^{2} + 9828n$.
\item The $(\{1, 2, 3\}, 1)$-row $(\{1, 4, 5\}, 4)$-column entry is $756n^{4} + 46602n^{3} - 44064n^{2} + 9828n$.
\item The $(\{1, 2, 3\}, 2)$-row $(\{1, 4, 5\}, 4)$-column entry is $540n^{4} + 46872n^{3} - 44145n^{2} + 9828n$.
\item The $(\{1, 2, 3\}, 1)$-row $(\{4, 5, 6\}, 4)$-column entry is $47736n^{3} - 44496n^{2} + 9828n$.
\item The $(1, 2)$-row $(1, 2)$-column entry is $3672n^{4} + 42228n^{3} - 42390n^{2} + 9828n$.
\item The $(1, 2)$-row $(1, 3)$-column entry is $3132n^{4} + 43092n^{3} - 42741n^{2} + 9828n$.
\item The $(1, 2)$-row $(2, 1)$-column entry is $2592n^{4} + 43740n^{3} - 42930n^{2} + 9828n$.
\item The $(1, 2)$-row $(2, 3)$-column entry is $1296n^{4} + 45738n^{3} - 43713n^{2} + 9828n$.
\item The $(1, 2)$-row $(3, 1)$-column entry is $1296n^{4} + 45738n^{3} - 43713n^{2} + 9828n$.
\item The $(1, 2)$-row $(3, 2)$-column entry is $540n^{4} + 46872n^{3} - 44145n^{2} + 9828n$.
\item The $(1, 2)$-row $(3, 4)$-column entry is $47736n^{3} - 44496n^{2} + 9828n$.
\item The $(\hat{1}, \hat{2})$-row $(\hat{1}, \hat{1})$-column entry is $918n^{4} + 10503n^{3} - 10557n^{2} + 2457n$.
\item The $(\hat{1}, \hat{1})$-row $(\hat{1}, \hat{2})$-column entry is $918n^{4} + 10503n^{3} - 10557n^{2} + 2457n$.
\item The $(\hat{1}, \hat{1})$-row $(\hat{2}, \hat{2})$-column entry is $11934n^{3} - 11124n^{2} + 2457n$.
\item The $(\hat{1}, \hat{1})$-row $(\hat{2}, \hat{1})$-column entry is $648n^{4} + 10989n^{3} - 10773n^{2} + 2457n$.
\item The $(\hat{1}, \hat{1})$-row $(\hat{2}, \hat{3})$-column entry is $11934n^{3} - 11124n^{2} + 2457n$.
\item The $(\hat{1}, \hat{2})$-row $(\hat{1}, \hat{2})$-column entry is $810n^{4} + 10692n^{3} - 10638n^{2} + 2457n$.
\item The $(\hat{1}, \hat{2})$-row $(\hat{2}, \hat{1})$-column entry is $756n^{4} + 10800n^{3} - 10692n^{2} + 2457n$.
\item The $(\hat{1}, \hat{2})$-row $(\hat{1}, \hat{3})$-column entry is $540n^{4} + 11070n^{3} - 10773n^{2} + 2457n$.
\item The $(\hat{1}, \hat{2})$-row $(\hat{3}, \hat{1})$-column entry is $378n^{4} + 11367n^{3} - 10908n^{2} + 2457n$.
\item The $(\hat{1}, \hat{2})$-row $(\hat{2}, \hat{3})$-column entry is $378n^{4} + 11367n^{3} - 10908n^{2} + 2457n$.
\item The $(\hat{1}, \hat{2})$-row $(\hat{3}, \hat{2})$-column entry is $270n^{4} + 11556n^{3} - 10989n^{2} + 2457n$.
\item The $(\hat{1}, \hat{2})$-row $(\hat{3}, \hat{4})$-column entry is $11934n^{3} - 11124n^{2} + 2457n$.
\item The $(\{1, 2, 3\}, 1)$-row $(1, 2)$-column entry is $2376n^{4} + 44226n^{3} - 43173n^{2} + 9828n$.
\item The $(\{1, 2, 3\}, 2)$-row $(1, 2)$-column entry is $2052n^{4} + 44604n^{3} - 43281n^{2} + 9828n$.
\item The $(\{1, 2, 3\}, 3)$-row $(1, 2)$-column entry is $1836n^{4} + 44874n^{3} - 43362n^{2} + 9828n$.
\item The $(\{1, 2, 3\}, 1)$-row $(1, 4)$-column entry is $1836n^{4} + 45090n^{3} - 43524n^{2} + 9828n$.
\item The $(\{1, 2, 3\}, 2)$-row $(1, 4)$-column entry is $1296n^{4} + 45738n^{3} - 43713n^{2} + 9828n$.
\item The $(\{1, 2, 3\}, 1)$-row $(4, 1)$-column entry is $756n^{4} + 46602n^{3} - 44064n^{2} + 9828n$.
\item The $(\{1, 2, 3\}, 2)$-row $(4, 1)$-column entry is $540n^{4} + 46872n^{3} - 44145n^{2} + 9828n$.
\item The $(\{1, 2, 3\}, 1)$-row $(4, 5)$-column entry is $47736n^{3} - 44496n^{2} + 9828n$.
\item The $(\{1, 2, 3\}, 1)$-row $(\hat{1}, \hat{1})$-column entry is $1296n^{4} + 21978n^{3} - 21546n^{2} + 4914n$.
\item The $(\{1, 2, 3\}, 2)$-row $(\hat{1}, \hat{1})$-column entry is $918n^{4} + 22437n^{3} - 21681n^{2} + 4914n$.
\item The $(\{1, 2, 3\}, 1)$-row $(\hat{1}, \hat{2})$-column entry is $1134n^{4} + 22167n^{3} - 21600n^{2} + 4914n$.
\item The $(\{1, 2, 3\}, 2)$-row $(\hat{1}, \hat{2})$-column entry is $1080n^{4} + 22248n^{3} - 21627n^{2} + 4914n$.
\item The $(\{1, 2, 3\}, 3)$-row $(\hat{1}, \hat{2})$-column entry is $918n^{4} + 22437n^{3} - 21681n^{2} + 4914n$.
\item The $(\{1, 2, 3\}, 1)$-row $(\hat{1}, \hat{4})$-column entry is $756n^{4} + 22734n^{3} - 21816n^{2} + 4914n$.
\item The $(\{1, 2, 3\}, 2)$-row $(\hat{1}, \hat{4})$-column entry is $540n^{4} + 23004n^{3} - 21897n^{2} + 4914n$.
\item The $(\{1, 2, 3\}, 1)$-row $(\hat{4}, \hat{1})$-column entry is $540n^{4} + 23112n^{3} - 21978n^{2} + 4914n$.
\item The $(\{1, 2, 3\}, 2)$-row $(\hat{4}, \hat{1})$-column entry is $378n^{4} + 23301n^{3} - 22032n^{2} + 4914n$.
\item The $(\{1, 2, 3\}, 1)$-row $(\hat{4}, \hat{4})$-column entry is $23868n^{3} - 22248n^{2} + 4914n$.
\item The $(\{1, 2, 3\}, 1)$-row $(\hat{4}, \hat{5})$-column entry is $23868n^{3} - 22248n^{2} + 4914n$.
\item The $(1, 2)$-row $(\hat{1}, \hat{1})$-column entry is $2214n^{4} + 20547n^{3} - 20979n^{2} + 4914n$.
\item The $(1, 2)$-row $(\hat{1}, \hat{2})$-column entry is $1674n^{4} + 21303n^{3} - 21249n^{2} + 4914n$.
\item The $(1, 2)$-row $(\hat{1}, \hat{3})$-column entry is $1296n^{4} + 21870n^{3} - 21465n^{2} + 4914n$.
\item The $(1, 2)$-row $(\hat{2}, \hat{1})$-column entry is $1458n^{4} + 21681n^{3} - 21411n^{2} + 4914n$.
\item The $(1, 2)$-row $(\hat{2}, \hat{2})$-column entry is $918n^{4} + 22437n^{3} - 21681n^{2} + 4914n$.
\item The $(1, 2)$-row $(\hat{2}, \hat{3})$-column entry is $540n^{4} + 23004n^{3} - 21897n^{2} + 4914n$.
\item The $(1, 2)$-row $(\hat{3}, \hat{1})$-column entry is $918n^{4} + 22545n^{3} - 21762n^{2} + 4914n$.
\item The $(1, 2)$-row $(\hat{3}, \hat{2})$-column entry is $378n^{4} + 23301n^{3} - 22032n^{2} + 4914n$.
\item The $(1, 2)$-row $(\hat{3}, \hat{4})$-column entry is $23868n^{3} - 22248n^{2} + 4914n$.
\end{itemize}

\subsection{Entries of $U^4$}

The entries of $U^4$ can be obtained using identical methods used for the entries of $U^2$ and $U^3$. The entries of $U^4$ up to index type are:
\begin{itemize}
\item The $(\{1, 2, 3\}, 1)$-row $(\{1, 2, 3\}, 1)$-column entry is $22032n^{6} + 1506276n^{5} - 2246778n^{4} + 1093824n^{3} - 173664n^{2}$.
\item The $(\{1, 2, 3\}, 1)$-row $(\{1, 2, 3\}, 2)$-column entry is $21060n^{6} + 1507896n^{5} - 2247669n^{4} + 1093986n^{3} - 173664n^{2}$.
\item The $(\{1, 2, 3\}, 1)$-row $(\{1, 2, 4\}, 1)$-column entry is $16524n^{6} + 1519236n^{5} - 2257065n^{4} + 1096578n^{3} - 173664n^{2}$.
\item The $(\{1, 2, 3\}, 1)$-row $(\{1, 2, 4\}, 2)$-column entry is $15552n^{6} + 1520856n^{5} - 2257956n^{4} + 1096740n^{3} - 173664n^{2}$.
\item The $(\{1, 2, 3\}, 1)$-row $(\{1, 2, 4\}, 4)$-column entry is $13284n^{6} + 1525392n^{5} - 2260953n^{4} + 1097388n^{3} - 173664n^{2}$.
\item The $(\{1, 2, 3\}, 3)$-row $(\{1, 2, 4\}, 4)$-column entry is $11016n^{6} + 1529928n^{5} - 2263950n^{4} + 1098036n^{3} - 173664n^{2}$.
\item The $(\{1, 2, 3\}, 1)$-row $(\{1, 4, 5\}, 1)$-column entry is $11016n^{6} + 1532196n^{5} - 2267352n^{4} + 1099332n^{3} - 173664n^{2}$.
\item The $(\{1, 2, 3\}, 1)$-row $(\{1, 4, 5\}, 4)$-column entry is $7776n^{6} + 1538352n^{5} - 2271240n^{4} + 1100142n^{3} - 173664n^{2}$.
\item The $(\{1, 2, 3\}, 2)$-row $(\{1, 4, 5\}, 4)$-column entry is $5508n^{6} + 1542888n^{5} - 2274237n^{4} + 1100790n^{3} - 173664n^{2}$.
\item The $(\{1, 2, 3\}, 1)$-row $(\{4, 5, 6\}, 4)$-column entry is $1555848n^{5} - 2284524n^{4} + 1103544n^{3} - 173664n^{2}$.
\item The $(1, 2)$-row $(1, 2)$-column entry is $37584n^{6} + 1471284n^{5} - 2220210n^{4} + 1087020n^{3} - 173664n^{2}$.
\item The $(1, 2)$-row $(1, 3)$-column entry is $32076n^{6} + 1484244n^{5} - 2230497n^{4} + 1089774n^{3} - 173664n^{2}$.
\item The $(1, 2)$-row $(2, 1)$-column entry is $26568n^{6} + 1494936n^{5} - 2237382n^{4} + 1091232n^{3} - 173664n^{2}$.
\item The $(1, 2)$-row $(2, 3)$-column entry is $13284n^{6} + 1525392n^{5} - 2260953n^{4} + 1097388n^{3} - 173664n^{2}$.
\item The $(1, 2)$-row $(3, 1)$-column entry is $13284n^{6} + 1525392n^{5} - 2260953n^{4} + 1097388n^{3} - 173664n^{2}$.
\item The $(1, 2)$-row $(3, 2)$-column entry is $5508n^{6} + 1542888n^{5} - 2274237n^{4} + 1100790n^{3} - 173664n^{2}$.
\item The $(1, 2)$-row $(3, 4)$-column entry is $1555848n^{5} - 2284524n^{4} + 1103544n^{3} - 173664n^{2}$.
\item The $(\hat{1}, \hat{2})$-row $(\hat{1}, \hat{1})$-column entry is $9396n^{6} + 367254n^{5} - 554202n^{4} + 271431n^{3} - 43416n^{2}$.
\item The $(\hat{1}, \hat{1})$-row $(\hat{1}, \hat{2})$-column entry is $9396n^{6} + 367254n^{5} - 554202n^{4} + 271431n^{3} - 43416n^{2}$.
\item The $(\hat{1}, \hat{1})$-row $(\hat{2}, \hat{2})$-column entry is $388962n^{5} - 571131n^{4} + 275886n^{3} - 43416n^{2}$.
\item The $(\hat{1}, \hat{1})$-row $(\hat{2}, \hat{1})$-column entry is $6642n^{6} + 374301n^{5} - 560196n^{4} + 273132n^{3} - 43416n^{2}$.
\item The $(\hat{1}, \hat{1})$-row $(\hat{2}, \hat{3})$-column entry is $388962n^{5} - 571131n^{4} + 275886n^{3} - 43416n^{2}$.
\item The $(\hat{1}, \hat{2})$-row $(\hat{1}, \hat{2})$-column entry is $8262n^{6} + 370089n^{5} - 556551n^{4} + 272079n^{3} - 43416n^{2}$.
\item The $(\hat{1}, \hat{2})$-row $(\hat{2}, \hat{1})$-column entry is $7776n^{6} + 371466n^{5} - 557847n^{4} + 272484n^{3} - 43416n^{2}$.
\item The $(\hat{1}, \hat{2})$-row $(\hat{1}, \hat{3})$-column entry is $5508n^{6} + 376002n^{5} - 560844n^{4} + 273132n^{3} - 43416n^{2}$.
\item The $(\hat{1}, \hat{2})$-row $(\hat{3}, \hat{1})$-column entry is $3888n^{6} + 380214n^{5} - 564489n^{4} + 274185n^{3} - 43416n^{2}$.
\item The $(\hat{1}, \hat{2})$-row $(\hat{2}, \hat{3})$-column entry is $3888n^{6} + 380214n^{5} - 564489n^{4} + 274185n^{3} - 43416n^{2}$.
\item The $(\hat{1}, \hat{2})$-row $(\hat{3}, \hat{2})$-column entry is $2754n^{6} + 383049n^{5} - 566838n^{4} + 274833n^{3} - 43416n^{2}$.
\item The $(\hat{1}, \hat{2})$-row $(\hat{3}, \hat{4})$-column entry is $388962n^{5} - 571131n^{4} + 275886n^{3} - 43416n^{2}$.
\item The $(\{1, 2, 3\}, 1)$-row $(1, 2)$-column entry is $24300n^{6} + 1501740n^{5} - 2243781n^{4} + 1093176n^{3} - 173664n^{2}$.
\item The $(\{1, 2, 3\}, 2)$-row $(1, 2)$-column entry is $21060n^{6} + 1507896n^{5} - 2247669n^{4} + 1093986n^{3} - 173664n^{2}$.
\item The $(\{1, 2, 3\}, 3)$-row $(1, 2)$-column entry is $18792n^{6} + 1512432n^{5} - 2250666n^{4} + 1094634n^{3} - 173664n^{2}$.
\item The $(\{1, 2, 3\}, 1)$-row $(1, 4)$-column entry is $18792n^{6} + 1514700n^{5} - 2254068n^{4} + 1095930n^{3} - 173664n^{2}$.
\item The $(\{1, 2, 3\}, 2)$-row $(1, 4)$-column entry is $13284n^{6} + 1525392n^{5} - 2260953n^{4} + 1097388n^{3} - 173664n^{2}$.
\item The $(\{1, 2, 3\}, 1)$-row $(4, 1)$-column entry is $7776n^{6} + 1538352n^{5} - 2271240n^{4} + 1100142n^{3} - 173664n^{2}$.
\item The $(\{1, 2, 3\}, 2)$-row $(4, 1)$-column entry is $5508n^{6} + 1542888n^{5} - 2274237n^{4} + 1100790n^{3} - 173664n^{2}$.
\item The $(\{1, 2, 3\}, 1)$-row $(4, 5)$-column entry is $1555848n^{5} - 2284524n^{4} + 1103544n^{3} - 173664n^{2}$.
\item The $(\{1, 2, 3\}, 1)$-row $(\hat{1}, \hat{1})$-column entry is $13284n^{6} + 748602n^{5} - 1120392n^{4} + 546264n^{3} - 86832n^{2}$.
\item The $(\{1, 2, 3\}, 2)$-row $(\hat{1}, \hat{1})$-column entry is $9396n^{6} + 756216n^{5} - 1125333n^{4} + 547317n^{3} - 86832n^{2}$.
\item The $(\{1, 2, 3\}, 1)$-row $(\hat{1}, \hat{2})$-column entry is $11664n^{6} + 751680n^{5} - 1122336n^{4} + 546669n^{3} - 86832n^{2}$.
\item The $(\{1, 2, 3\}, 2)$-row $(\hat{1}, \hat{2})$-column entry is $11016n^{6} + 753138n^{5} - 1123389n^{4} + 546912n^{3} - 86832n^{2}$.
\item The $(\{1, 2, 3\}, 3)$-row $(\hat{1}, \hat{2})$-column entry is $9396n^{6} + 756216n^{5} - 1125333n^{4} + 547317n^{3} - 86832n^{2}$.
\item The $(\{1, 2, 3\}, 1)$-row $(\hat{1}, \hat{4})$-column entry is $7776n^{6} + 760428n^{5} - 1128978n^{4} + 548370n^{3} - 86832n^{2}$.
\item The $(\{1, 2, 3\}, 2)$-row $(\hat{1}, \hat{4})$-column entry is $5508n^{6} + 764964n^{5} - 1131975n^{4} + 549018n^{3} - 86832n^{2}$.
\item The $(\{1, 2, 3\}, 1)$-row $(\hat{4}, \hat{1})$-column entry is $5508n^{6} + 766098n^{5} - 1133676n^{4} + 549666n^{3} - 86832n^{2}$.
\item The $(\{1, 2, 3\}, 2)$-row $(\hat{4}, \hat{1})$-column entry is $3888n^{6} + 769176n^{5} - 1135620n^{4} + 550071n^{3} - 86832n^{2}$.
\item The $(\{1, 2, 3\}, 1)$-row $(\hat{4}, \hat{4})$-column entry is $777924n^{5} - 1142262n^{4} + 551772n^{3} - 86832n^{2}$.
\item The $(\{1, 2, 3\}, 1)$-row $(\hat{4}, \hat{5})$-column entry is $777924n^{5} - 1142262n^{4} + 551772n^{3} - 86832n^{2}$.
\item The $(1, 2)$-row $(\hat{1}, \hat{1})$-column entry is $22680n^{6} + 726894n^{5} - 1103463n^{4} + 541809n^{3} - 86832n^{2}$.
\item The $(1, 2)$-row $(\hat{1}, \hat{2})$-column entry is $17172n^{6} + 738720n^{5} - 1112049n^{4} + 543915n^{3} - 86832n^{2}$.
\item The $(1, 2)$-row $(\hat{1}, \hat{3})$-column entry is $13284n^{6} + 747468n^{5} - 1118691n^{4} + 545616n^{3} - 86832n^{2}$.
\item The $(1, 2)$-row $(\hat{2}, \hat{1})$-column entry is $14904n^{6} + 744390n^{5} - 1116747n^{4} + 545211n^{3} - 86832n^{2}$.
\item The $(1, 2)$-row $(\hat{2}, \hat{2})$-column entry is $9396n^{6} + 756216n^{5} - 1125333n^{4} + 547317n^{3} - 86832n^{2}$.
\item The $(1, 2)$-row $(\hat{2}, \hat{3})$-column entry is $5508n^{6} + 764964n^{5} - 1131975n^{4} + 549018n^{3} - 86832n^{2}$.
\item The $(1, 2)$-row $(\hat{3}, \hat{1})$-column entry is $9396n^{6} + 757350n^{5} - 1127034n^{4} + 547965n^{3} - 86832n^{2}$.
\item The $(1, 2)$-row $(\hat{3}, \hat{2})$-column entry is $3888n^{6} + 769176n^{5} - 1135620n^{4} + 550071n^{3} - 86832n^{2}$.
\item The $(1, 2)$-row $(\hat{3}, \hat{4})$-column entry is $777924n^{5} - 1142262n^{4} + 551772n^{3} - 86832n^{2}$.
\end{itemize}

\subsection{The minimal polynomial of $U$}

Fix $n \geq 4$. Let $b = -3n(4n-3)$ and $c = 54[2\binom{n+2}{4} + 5\binom{n+1}{4} + \binom{n}{4}]$ and $d=15n(2n-1)$. Note $c=18n^4-27n^3+9n^2$. Let $f=b-d$ and $g=c-bd$ and $h=-cd$. Note $f = -42n^2+24n$ and $g = 378n^4 - 477n^3 + 144n^2$ and $h = -540n^6 + 1080n^5 - 675n^4 + 135n^3$. 

Define $\mu(x) = x(x-d)(x^2+bx+c)$. Then $\mu(x)=x^4+fx^3+gx^2+hx$. For example, when $n=4$, then $\mu(x) = x(x-420)(x^2 - 156x + 3024) = x^4 - 576x^3 + 68544x^2 - 1270080x$, and when $n=5$, then $\mu(x) = x(x-675)(x^2 - 255x + 8100) = x^4 - 930x^3 + 180225x^2 - 5467500x$.

It turns out that $U^4 + fU^3 + gU^2 + hU = 0$. For instance, the $(\{1, 2, 3\}, 1)$-row $(\{1, 2, 3\}, 1)$-column entry of $U^4 + fU^3 + gU^2 + hU$ is $(22032n^{6} + 1506276n^{5} - 2246778n^{4} + 1093824n^{3} - 173664n^{2}) + (-42n^2+24n)(2160n^{4} + 44496n^{3} - 43254n^{2} + 9828n) + (378n^4 - 477n^3 + 144n^2)(216n^{2} + 1026n - 432) + (-540n^6 + 1080n^5 - 675n^4 + 135n^3)(24)$, which is equal to $0$. A similar calculation has been verified for every entry type, which the reader may check using the provided code. Thus, $\mu(U) = 0$.

We note $f<0$ and $g>0$ and $h<0$, since $b<0$ and $c>0$ and $d>0$. By Descartes' Law of Signs, $\mu$ has no negative real roots. Thus $\mu$ is (a multiple of) the minimum polynomial of $U$. Since $U$ is symmetric (and thus all eigenvalues of $U$ are real), this proves that $U$ is positive semidefinite. (For $n \leq 3$, the positive semidifefiniteness of $U$ was directly verified.)

\subsection{Remarks}

Since $\mu$ is (a multiple of) the minimum polynomial of $U$, the distinct eigenvalues of $U$ are $0$, the integer $d=15n(2n-1)$, and the roots of $q(x) = x^2+bx+c$. While our proof only shows that $\mu$ is a multiple of the minimum polynomial of $U$, but through computation for many values of $n$, we believe that $\mu$ is the minimum polynomial of $U$.

While the linear coefficient of $q(x)$ had a relatively nice formula $b = -3n(4n-3)$, the constant term $c = 54[2\binom{n+2}{4} + 5\binom{n+1}{4} + \binom{n}{4}]$ deserves some explanation. The values of $\frac{1}{54}c$ starting at $n=4$ are $56$, $150$, $330$, $637$, $1120$, $1836$, and so on. This sequence is every other term of A181474 (see~\cite{oeis:A181474}). Thus, $\frac{1}{54}c$ is the coefficient of $x^{2n-3}$ in the generating function
\[\gamma(x) = \frac{1+x+4x^2+x^3+x^4}{(1-x)^5(1+x)^4}.\]
After applying Taylor expansion to $\frac1{(1-x^2)^4}$ and then to $\frac1{(1-x^2)^4(1-x)}$, the result of multiplying by $1+x+4x^2+x^3+x^4$ yielded that the coefficient of $x^{2n-3}$ in $\gamma(x)$, and thus the value of $\frac{1}{54}c$, is the following weighted sum of binomial coefficients:
\[2\binom{n+2}{4} + 5\binom{n+1}{4} + \binom{n}{4}.\]

Though we already showed that $U$ is positive-semidefinite in the previous subsection via alternating coefficients on the minimal polynomial, we have computational evidence which suggests the following additional spectral data:

First, recall that $U$ is a square matrix of size $3\binom{n}{3} + n(n-1) + n^2$. It appears that the integer $d$ is an eigenvalue of $U$ with multiplicity $1$, while $0$ is an eigenvalue of multiplicity $w = 3\binom{n}{3} + 2n^2 - 3n+1$ and the two distinct roots of $q(x)$ appear as eigenvalues of multiplicity $n-1$ each. That is to say, it appears that the characteristic polynomial of $U$ is $\chi(x) = x^w(x-d)(x^2+bx+c)^{n-1}$.

\subsubsection{Eigenvectors of $U$}

Let $e_{(\{i,j,k\},i)}$ denote the standard unit basis vector for the $(\{i,j,k\},i)$-entry in block $1$.
Likewise, $e_{(i,j)}$ denotes the standard basis vector in block $2$, and $e_{(\hat{i},\hat{j})}$ denotes the standard basis vector in block $3$.

For a fixed $n$, the vector
\[ \sum_{(\{i,j,k\},i)} e_{(\{i,j,k\},i)} + \sum_{(i,j)} e_{(i,j)} + \frac12 \sum_{(\hat{i},\hat{j})} e_{(\hat{i},\hat{j})} \]
is an eigenvector of $U$ corresponding to $\lambda = d$.

We present below a set of linearly independent eigenvectors of $U$ corresponding to $\lambda = 0$:
\begin{itemize}
\item $e_{(\{i,j,k\},i)}-e_{(\hat{j},\hat{n})}-e_{(\hat{k},\hat{n})}-2e_{(\hat{n},\hat{i})}+2e_{(\hat{n},\hat{n})}$ whenever $i,j,k\neq n$
\item $e_{(i,j)}-e_{(\hat{i},\hat{n})}-e_{(\hat{j},\hat{n})}-2e_{(\hat{n},\hat{i})}+2e_{(\hat{n},\hat{n})}$ whenever $i,j\neq n$
\item $e_{(\{i,j,n\},i)}-e_{(\hat{i},\hat{n})}-e_{(\hat{j},\hat{n})}-2e_{(\hat{n},\hat{j})}+e_{(\hat{n},\hat{n})}$ whenever $i,j\neq n$
\item $e_{(i,n)}-e_{(\hat{i},\hat{n})}-2e_{(\hat{n},\hat{i})}+e_{(\hat{n},\hat{n})}$ whenever $i\neq n$
\item $e_{(\{i,j,n\},n)}-e_{(\hat{i},\hat{n})}-e_{(\hat{j},\hat{n})}$ whenever $i,j\neq n$
\item $e_{(n,i)}-e_{(\hat{i},\hat{n})}-e_{(\hat{n},\hat{n})}$ whenever $i\neq n$
\item $e_{(\hat{i},\hat{j})} - e_{(\hat{i},\hat{n})} - e_{(\hat{n},\hat{j})} + e_{(\hat{n},\hat{n})}$ whenever $i,j \leq n$
\end{itemize} 
Based on the coordinates of the seven types of vectors mentioned above, this gives 
$3\binom{n-1}{3} + 2\binom{n-1}{2} + 2\binom{n-1}{2} + (n-1) + \binom{n-1}{2} + (n-1) + (n-1)^2$ of linearly independent eigenvectors of $U$ corresponding to $\lambda = 0$, thus the geometric multiplicity of $\lambda=0$ matches the algebraic multiplicity $w = 3\binom{n}{3} + 2n^2 - 3n+1$.

Diagonalization for each $n$ would be complete if we could, for each $n$, obtain exact representations of $n-1$ linearly independent eigenvectors of $U$ for each root of $q(x)=x^2+bx+c$. The sum of the eigenspaces corresponding to the two roots of $q$ form a $2(n-1)$-dimensional subspace of $\mathbb{R}^{3\binom{n}{3} + n(n-1) + n^2}$. Say that $c_1, \dots, c_{2(n-1)}$ is a basis for this subspace. We make use of the fact that each $c_i$ is orthogonal to the eigenvectors corresponding to $\lambda=0$ and $\lambda=d$. Thus, a concrete way to find obtain a basis is to place the eigenvectors for $0$ and $d$ as the rows of a matrix and find a basis for its kernel. We denote by $V$ the $(3\binom{n}{3} + n(n-1) + n^2) \times 2(n-1)$ matrix whose columns are the vectors $c_1, \dots, c_{2(n-1)}$ generating the kernel. Fix $\lambda$ to be a root of $q(x)$. Then finding a non-zero $x \in \mathbb{R}^{3\binom{n}{3} + n(n-1) + n^2}$ satisfying $Ux=\lambda x$ is the same as finding a parameter vector $\alpha \in \mathbb{R}^{2(n-1)}$ satisfying $W\alpha = 0$, where $x = V\alpha$ and $W=UV-\lambda V$. Finding an exact representation for a vector $\alpha$ in the kernel of $UV-\lambda V$ can be done for fixed $n$, and then $x=V\alpha$ is an eigenvector of $U$ corresponding to $\lambda$.

For concreteness, we describe this process for $n=4$. In this case, $U$ is a $40 \times 40$ matrix, and the roots of $q(x)=x^2 - 156x + 3024$ are $78 \pm 6 \sqrt{85}$. Let us fix $\lambda = 78 + 6 \sqrt{85}$. A minimal generating set of eigenvectors for $0$ and $d$ consists of $34$ vectors. These vectors can be the rows of a $34 \times 40$ matrix, and a basis for the kernel of this matrix is found as the columns of
\[
V = \left[\begin{array}{rrrrrr}
1 & 0 & 0 & 0 & 0 & 0 \\
0 & 1 & 0 & 0 & 0 & 0 \\
0 & 0 & 1 & 0 & 0 & 0 \\
0 & 0 & 0 & 1 & 0 & 0 \\
-1 & 1 & 0 & 1 & 0 & 0 \\
0 & 0 & 0 & 0 & 1 & 0 \\
0 & 0 & 0 & 0 & 0 & 1 \\
0 & 1 & -1 & 0 & 0 & 1 \\
1 & 0 & -1 & -1 & 1 & 1 \\
-1 & -1 & 1 & 0 & -1 & -1 \\
0 & -1 & 0 & 0 & -1 & -1 \\
0 & -1 & 0 & -1 & 0 & -1 \\
0 & 2 & 0 & 1 & 1 & 1 \\
1 & 2 & -1 & 0 & 1 & 2 \\
0 & 2 & -1 & 1 & 1 & 2 \\
0 & 0 & 1 & 1 & 0 & -1 \\
0 & -1 & 1 & 0 & -1 & -2 \\
-1 & -1 & 1 & 1 & -1 & -2 \\
1 & 0 & 0 & -1 & 0 & 1 \\
0 & -1 & 1 & -1 & -1 & -1 \\
0 & -1 & 0 & -1 & -1 & 0 \\
0 & 0 & -1 & 0 & 1 & 1 \\
-1 & -1 & 0 & 0 & 0 & -1 \\
0 & -1 & -1 & -1 & 0 & 0 \\
\frac{1}{2} & \frac{3}{2} & -\frac{1}{2} & \frac{1}{2} & 1 & \frac{3}{2} \\
\frac{1}{2} & \frac{1}{2} & 0 & \frac{1}{2} & \frac{1}{2} & \frac{1}{2} \\
\frac{1}{2} & \frac{1}{2} & 0 & 0 & \frac{1}{2} & 1 \\
\frac{1}{2} & \frac{1}{2} & -\frac{1}{2} & 0 & 1 & 1 \\
-\frac{1}{2} & \frac{1}{2} & \frac{1}{2} & \frac{1}{2} & 0 & -\frac{1}{2} \\
-\frac{1}{2} & -\frac{1}{2} & 1 & \frac{1}{2} & -\frac{1}{2} & -\frac{3}{2} \\
-\frac{1}{2} & -\frac{1}{2} & 1 & 0 & -\frac{1}{2} & -1 \\
-\frac{1}{2} & -\frac{1}{2} & \frac{1}{2} & 0 & 0 & -1 \\
\frac{1}{2} & \frac{1}{2} & -\frac{1}{2} & -\frac{1}{2} & 0 & \frac{1}{2} \\
\frac{1}{2} & -\frac{1}{2} & 0 & -\frac{1}{2} & -\frac{1}{2} & -\frac{1}{2} \\
\frac{1}{2} & -\frac{1}{2} & 0 & -1 & -\frac{1}{2} & 0 \\
\frac{1}{2} & -\frac{1}{2} & -\frac{1}{2} & -1 & 0 & 0 \\
-\frac{1}{2} & \frac{1}{2} & -\frac{1}{2} & \frac{1}{2} & 0 & \frac{1}{2} \\
-\frac{1}{2} & -\frac{1}{2} & 0 & \frac{1}{2} & -\frac{1}{2} & -\frac{1}{2} \\
-\frac{1}{2} & -\frac{1}{2} & 0 & 0 & -\frac{1}{2} & 0 \\
-\frac{1}{2} & -\frac{1}{2} & -\frac{1}{2} & 0 & 0 & 0
\end{array}\right]
.\]
Let $W = UV - (78 + 6 \sqrt{85})V$. Instead of describing a complete basis for the eigenspace of $78 + 6 \sqrt{85}$, we summarize what happens with one vector. For any vector $\alpha$ satisfying $W\alpha=0$, the vector $x = V \alpha$ is an eigenvector of $U$ corresponding to $\lambda = 78 + 6\sqrt{85}$ given with exact coordinates. Displaying $\alpha$ and $x$ requires large-format paper, and both are available at
\begin{center}
\url{https://www.overleaf.com/read/xhmyxkywrqhb}
\end{center}
While we have an alternate proof that $U$ is positive semidefinite, what remains open (and interesting) is finding, for all $n$, exact representations of eigenvectors corresponding to both roots of $q(x)$.

\section*{Acknowledgements}

The authors are grateful for support from the Dean's Distinguished Fellowship of the College of Science and Health at the University of Wisconsin-La Crosse. The authors are also grateful for support from a Undergraduate Research \& Creativity Committee Grant from the University of Wisconsin-La Crosse.


\begin{thebibliography}{1}

\bibitem{BenIsraelThomas}
A.~Ben-Israel, N.~E.~Thomas. \emph{Generalized inverses: Theory and applications}. 2nd ed. Springer. New York, NY (2003)

\bibitem{BessisMoussaVillani}
D.~Bessis, P.~Moussa, M.~Villani. 
Monotonic converging variational approximations to the functional integrals in quantum statistical mechanics.
\emph{J. Math. Phys.}
16:2318--2325 (1975)

\bibitem{Burgdorf:TracialMoment}
S.~Burgdorf, K.~Cafuta, I.~Klep, J.~Povh.
The tracial moment problem and trace-optimization of polynomials.
\emph{Mathematical Programming} 137:557--578 (2013)

\bibitem{Burgdorf:CyclicEquivalenceChapter}
S.~Burgdorf, I.~Klep, J.~Povh.
\emph{Optimization of Polynomials in Non-Commuting Variables}.
SpringerBriefs in Mathematics.
Springer Nature (2016)

\bibitem{Collins}
B.~Collins, K.~J.~Dykema, and F.~Torres-Ayala.
Sum-of-squares results for polynomials related to the Bessis-Moussa-Villani conjecture.
\emph{J. Stat. Phys.} 139:779--799 (2010)

\bibitem{Eremenko}
A.~Eremenko.
Herbert Stahl's proof of the BMV conjecture.
\emph{Mat. Sbornik} 206(1):87--92 (2015)

\bibitem{KimMillerZinnel:CDTA}
E.~D.~Kim, J.~Miller, L.~Zinnel. On a polynomial positivity question of Collins, Dykema, and Torres-Ayala related to the Bessis-Moussa-Villani Conjecture. \emph{Under review} (2021)

\bibitem{Laurent}
M.~Laurent.
Sums of squares, moment matrices and optimization over polynomials
\emph{Emerging Applications of Algebraic Geometry}, IMA Vol. Math. Appl., vol. 149, Springer, New York, pp. 157--270 (2009)

\bibitem{LiebSeiringer}
E.~H.~Lieb, R.~Seiringer.
Equivalent forms of the Bessis-Moussa-Villani conjecture.
\emph{J. Stat. Phys.}
115:185--190
(2004)

\bibitem{Moore}
E.~H.~Moore. On the reciprocal of the general algebraic matrix. \emph{Bulletin of the American Mathematical Society} 26(9):394--95 (1920)

\bibitem{oeis:A181474}
OEIS Foundation Inc. (2023), Entry A181474 in The On-Line Encyclopedia of Integer Sequences, \url{http://oeis.org/A181474}

\bibitem{Penrose}
R.~Penrose. A generalized inverse for matrices. \emph{Proceedings of the Cambridge Philosophical Society} 51(3):406--13 (1955)

\bibitem{Prajna}
S.~Prajna, A.~Papachristodoulou, P.~A.~Parrilo. Introducing SOSTOOLS: a general purpose sum of squares programming solver. \emph{Proceedings of the 41st IEEE Conference on Decision and Control}, Las Vegas, USA (2002)

\bibitem{Riener}
C.~Riener, T.~Theobald, L.~J.~Andrén, J.~B.~Lasserre.
Exploiting Symmetries in SDP-Relaxations for Polynomial Optimization. \emph{Mathematics of Operations Research} 38(1):122--141 (2013)

\bibitem{sage}
SageMath, the Sage Mathematics Software System (Version 9.3),
   The Sage Developers, 2021, https://www.sagemath.org.

\bibitem{Stahl:ProofBMV}
H.~R.~Stahl.
Proof of the BMV Conjecture.
\emph{Acta Math.} 211(2):255--290 (2013)

\bibitem{Zhang}
F.~Zhang. \emph{The Schur Complement and Its Applications}. Numerical Methods and Algorithms, Vol. 4. Springer (2005)

\end{thebibliography}
\end{document}